\newtheorem{thm}{Theorem}
\newtheorem{prop}[thm]{Proposition}
\newtheorem{lemma}[thm]{Lemma}
\theoremstyle{plain}
\numberwithin{equation}{section}
\theoremstyle{remark}
\newcommand{\Comment}[1]{}
\newcommand{\rbr}[1]{\left( {#1} \right)}
\newcommand{\cbr}[1]{\left\{ {#1} \right\}}
\newcommand{\abs}[1]{\left| {#1} \right|}
\newcommand{\norm}[1]{\left\|#1\right\|}
\newcommand{\eq}[1]{(\ref{#1})}
\def\one{\mathbf{1}}
\def\RR{\mathbb{R}}
\def\ZZ{\mathbb{Z}}
\def\NN{\mathbb{N}}
\def\sinc{\text{sinc}}
\begin{document}

\title{On the sharpness of Mockenhaupt's restriction theorem}
\author{Kyle Hambrook and Izabella {\L}aba}
\date{May 30, 2013 (revised)}

\begin{abstract}

We prove that the range of exponents in Mockenhaupt's restriction theorem for Salem sets \cite{M}, with the endpoint estimate due to Bak and Seeger \cite{BS}, is optimal.

Mathematics Subject Classification: 28A78, 42A32, 42A38, 42A45
\end{abstract}

\maketitle

\section{Introduction}
Using a Stein-Tomas type argument, Mockenhaupt \cite{M} (see also Mitsis \cite{Mi}) proved the following restriction theorem, with the endpoint due to Bak and Seeger \cite{BS}.

\begin{thm}\label{Mockenhaupt}
Let $\mu$ be a compactly supported positive measure on $\RR^n$ such that for some $\alpha, \beta\in(0,n)$ we have 
\begin{equation}\label{e-ball}
\mu(B(x,r)) \leq C_1 r^{\alpha}\ \ \text{for all}\ \ x \in \mathbb{R}^n\ \text{and}\ r > 0,
\end{equation}
\begin{equation}\label{e-salem}
\abs{\widehat{\mu}(\xi)} \leq C_2 (1+|\xi|)^{-\beta/2}\ \ \text{for all}\ \ \xi \in \mathbb{R}^n.
\end{equation}
Then for all $p \geq p_{n,\alpha,\beta}:=\frac{2(2n-2\alpha+\beta)}{\beta}$, there is a $C(p) > 0$ such that 
\begin{equation}\label{estimate}
\| \widehat{fd\mu} \|_{L^{p}(\mathbb{R}^n)} \leq C(p) \| f \|_{L^2(d\mu)}
\end{equation}
for all $f \in L^2(d\mu)$. The equivalent dual form of this assertion is: For all $1 \leq p^{\prime} \leq \frac{2(2n-2\alpha+\beta)}{4(n-\alpha)+\beta}$, there is a $C(p^{\prime}) > 0$ such that
\begin{equation}\label{e-dual}
\|\widehat{f} \|_{L^2(d\mu)} \leq C(p^{\prime}) \|f\|_{L^{p^{\prime}}(\mathbb{R}^n)}
\end{equation}
for all $f \in L^{p^{\prime}}(\mathbb{R}^n)$. 
\end{thm}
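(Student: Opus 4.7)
The plan is to use a Stein--Tomas style $TT^*$ argument combined with a Littlewood--Paley decomposition of $\widehat{\mu}$ and Riesz--Thorin interpolation. First I would pass from \eqref{estimate} to its $TT^*$ form: with $Tf := \widehat{f\, d\mu}$, an elementary calculation shows that $TT^*$ is convolution by (a reflection of) $\widehat{\mu}$, so \eqref{estimate} for conjugate exponents $p$ and $p'$ is equivalent to the convolution bound
\[
\|g * \widehat{\mu}\|_{L^p(\RR^n)} \leq C^2 \|g\|_{L^{p'}(\RR^n)}.
\]
This reformulation turns the restriction estimate into a familiar multiplier estimate on $\widehat{\mu}$.

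Next I would fix a smooth radial partition of unity $\{\chi_j\}_{j \geq 0}$ on the frequency side, with $\chi_j$ supported in an annulus $|\xi| \sim 2^j$ for $j \geq 1$ and $\chi_0$ supported in the unit ball, and decompose $\widehat{\mu} = \sum_{j \geq 0} K_j$ with $K_j := \widehat{\mu} \cdot \chi_j$. For each $j$ I would prove two endpoint estimates. From the Salem hypothesis \eqref{e-salem} I get $\|K_j\|_\infty \lesssim 2^{-j\beta/2}$, which by Young's inequality yields $\|g * K_j\|_\infty \lesssim 2^{-j\beta/2}\, \|g\|_1$. For the $L^2$ endpoint I would write $\widehat{K_j} = \mu * \widehat{\chi_j}$; since $\widehat{\chi_j}$ is an $L^1$-normalized Schwartz tail at scale $2^{-j}$, the Frostman--type bound \eqref{e-ball} summed over dyadic shells yields $\|\widehat{K_j}\|_\infty \lesssim 2^{j(n-\alpha)}$, and Plancherel translates this into $\|g * K_j\|_2 \lesssim 2^{j(n-\alpha)}\, \|g\|_2$.

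Riesz--Thorin interpolation between these two bounds along the duality line $1/p + 1/p' = 1$ then produces
\[
\|g * K_j\|_p \lesssim 2^{j \left( \frac{2(n-\alpha) + \beta}{p} - \frac{\beta}{2} \right)} \|g\|_{p'},
\]
whose exponent of $2^j$ is strictly negative precisely when $p > p_{n,\alpha,\beta}$. Summing a geometric series in $j$ and combining with the trivial contribution of $K_0$ then gives \eqref{estimate} for all such $p$, and hence \eqref{e-dual} by duality in the corresponding open range.

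The main obstacle is the endpoint $p = p_{n,\alpha,\beta}$ itself, where the exponent above vanishes and the naive dyadic summation diverges logarithmically; this is precisely the Bak--Seeger refinement \cite{BS}. To recover it I would follow their scheme of upgrading the strong-type single-scale bounds to restricted weak-type bounds taking values in the Lorentz space $L^{p,2}$ and invoking a Bourgain--Stein style complex interpolation argument that sums the dyadic pieces in $\ell^2$ rather than $\ell^1$ at the critical exponent. This Lorentz-space step is the most delicate point of the whole proof, but is unnecessary for any $p$ strictly above $p_{n,\alpha,\beta}$.
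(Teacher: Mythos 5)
This theorem is not proved in the paper at all --- it is quoted from Mockenhaupt \cite{M} (see also Mitsis \cite{Mi}), with the endpoint due to Bak and Seeger \cite{BS} --- and your sketch is exactly the standard Stein--Tomas $TT^*$ argument those sources use: the single-scale bounds $\|K_j\|_\infty\lesssim 2^{-j\beta/2}$ (from the Fourier decay) and $\|\widehat{K_j}\|_\infty\lesssim 2^{j(n-\alpha)}$ (from the ball condition via the dyadic-shell summation), and the interpolated exponent $\frac{2(n-\alpha)+\beta}{p}-\frac{\beta}{2}$, are all correct and give the full open range $p>p_{n,\alpha,\beta}$. The endpoint $p=p_{n,\alpha,\beta}$ is only gestured at in your last paragraph, but since the paper itself defers that case entirely to \cite{BS}, your treatment is consistent with the level of detail the paper provides.
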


When $\alpha=\beta=n-1$ and $\mu$ is the surface measure on the unit sphere $S^{n-1}$ in $\RR^n$, this is the classical Stein-Thomas theorem \cite{tomas-1}, \cite{tomas-2}, \cite{stein-beijing}, \cite{stein-ha}. The point of Theorem \ref{Mockenhaupt} is that similar estimates hold for less regular measures obeying (\ref{e-ball}) and (\ref{e-salem}), including fractal measures with $\alpha,\beta$ not necessarily integer.

It is well known (see e.g. \cite{mattila-book}, \cite{W}) that if a measure $\mu$ is supported on a set of Hausdorff dimension $\alpha_0<n$ and obeys (\ref{e-ball}) and (\ref{e-salem}), we must necessarily have $\alpha \leq \alpha_0$ and $\beta\leq\alpha_0$. The surface measure on the sphere provides an example with $\alpha=\beta=\alpha_0$. We do not know whether this is possible when $\alpha_0$ is non-integer, but there are many constructions of measures  supported on sets of fractional Hausdorff dimension $\alpha_0$ for which (\ref{e-ball}) and (\ref{e-salem}) hold with $\alpha$ and $\beta$ both arbitrarily close to $\alpha_0$.
Salem \cite{salem} constructed measures on $[0,1]$ supported on sets of Hausdorf dimension $0<\alpha<1$, and obeying (\ref{e-ball}) with the same $\alpha$, such that (\ref{e-salem}) holds for all $0<\beta<\alpha$ with the constant $C_2$ depending on $\beta$. (The verification of (\ref{e-ball}) for Salem's construction is in \cite{M}.) Further examples  are in \cite{bluhm-1}, \cite{bluhm-2}, \cite{kahane}, \cite{kaufman}, \cite{LP}.


We are interested in the question of the sharpness of the range of $p$ in Theorem \ref{Mockenhaupt}. It is easy to see that if $\mu$ is a probability measure on $\RR^n$ supported on a compact set of Hausdorff dimension $\alpha_0<n$, then (\ref{estimate}) cannot hold for any $p<2n/\alpha_0$, even if the $L^2$ norm on the right side is replaced by the stronger $L^\infty$ norm. Indeed, let $f\equiv 1$, so that $\widehat{fd\mu}=\widehat{\mu}$. The assumption on the support of $\mu$ implies that for any $\gamma>\alpha_0$ we have
$$
I_\gamma(\mu)=\int_{|\xi|\geq 1} |\widehat{\mu}(\xi)|^2 \,|\xi|^{-(n-\gamma)}d\xi=\infty
$$
(This is the usual energy integral, with the $|\xi|\leq 1$ region removed. See e.g. \cite{mattila-book}, \cite{W}.) On the other hand, by H\"older's inequality we have
$$
I_\gamma(\mu)\leq \|\widehat{\mu}\|_p^2 \,
\Big(\int_{|\xi|\geq 1}|\xi|^{-(n-\gamma)\frac{p}{p-2}}\Big)^{\frac{p-2}{p}},
$$
and the last integral is finite for $p<2n/\gamma$, so that $\|\widehat{\mu}\|_p=\infty$ for such $p$. The conclusion follows by letting $\gamma\to \alpha_0$.

In the most interesting case when $\alpha$ and $\beta$ can be taken arbitrarily close to $\alpha_0$, this leaves the intermediate range
\begin{equation}\label{e-range}
\frac{2n}{\alpha_0}\leq p <  \frac{4n - 2\alpha_0}{\alpha_0}.    
\end{equation}

In the case of the Tomas-Stein theorem, where $\mu$ is the surface measure on the unit sphere in $\RR^n$ and $\alpha=\beta=n-1$, the estimate (\ref{estimate}) is known to fail for all $p<\frac{4n-2\alpha}{\alpha}=\frac{2n+2}{n-1}$. This is seen from the so-called Knapp example, where (\ref{estimate}) is tested on characteristic functions of small spherical caps (see e.g. \cite{stein-ha}, \cite{W}). It has not been known whether similar examples exist for sets of fractional dimension. Mockenhaupt \cite{M} stated that he could not exclude the possibility that for $n=1$ and $\alpha_0=\alpha\in(0,1)$, the estimate (\ref{estimate}) could in fact hold for all $p>2/\alpha$. 
Mitsis \cite{Mi} and Bak and Seeger \cite{BS} did not try to address this question. 

In this regard, we have the following result for $n=1$.

\begin{thm}\label{main}
For $\alpha\in (0,1)$ such that $\alpha=\frac{\log( t_0)}{\log(n_0)}$ for some $t_0,n_0\in\NN$, $n_0\neq 1$, and for every $1\leq p<\frac{4}{\alpha}-2$,  the following holds. There is a probability measure $\mu$ on $[0,1]$ supported on a set $E$ of dimension $\alpha$, and a sequence of functions $\{f_\ell\}_{j\in\NN}$ on $[0,1]$ (characteristic functions of finite unions of intervals), such that
\begin{itemize}
\item $\mu$ obeys (\ref{e-ball}) with the given value of $\alpha$,
\item $\mu$ obeys (\ref{e-salem}) for every $\beta<\alpha$ (with $C_2$ depending on $\beta$),
\item the restriction estimate (\ref{estimate}) fails for the sequence $\{f_\ell\}$, i.e.
\begin{equation}\label{restriction-fail}
\frac{\| \widehat{f_\ell d\mu} \|_{L^{p}(\mathbb{R})}}{\| f_\ell \|_{L^2(d\mu)}} \rightarrow \infty
\ \ \text{as}\ \ \ell\to\infty.
\end{equation}
\end{itemize}

\end{thm}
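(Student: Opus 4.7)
My approach is a Knapp-style construction tailored to Salem fractals in one dimension. The plan is to exhibit a probability measure $\mu$ on $[0,1]$ of Hausdorff dimension $\alpha = \log t_0/\log n_0$ that satisfies \eqref{e-ball} and \eqref{e-salem} for every $\beta < \alpha$, and whose support contains, at each scale $n_0^{-\ell}$, an arithmetic progression $\{I_1,\dots,I_{M_\ell}\}$ of $M_\ell$ basic level-$\ell$ intervals with common spacing $\Delta_\ell$. The Knapp test functions are $f_\ell = \chi_{A_\ell}$ with $A_\ell = \bigcup_{j=1}^{M_\ell} I_j$.

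For the construction I would use a random Cantor iteration in base $n_0$, in the spirit of the random Salem sets of Salem and \L aba--Pramanik: at stage $k$, each surviving interval of length $n_0^{-k}$ is refined into $t_0$ of $n_0$ equally spaced subintervals, chosen uniformly at random at most stages, and deterministically as an arithmetic progression at a sparse designated sequence of ``AP stages,'' inherited by descendants so as to build APs of length $M_\ell$ at scale $\ell$. The ball condition \eqref{e-ball} is then automatic from $\mu(I) = t_0^{-k} = n_0^{-\alpha k}$ on level-$k$ intervals. For \eqref{e-salem}, I would expand $\widehat\mu(\xi)$ as a product of stage-by-stage factors and apply a Bernstein/large-deviation estimate across dyadic frequency shells to the random stages, taking the AP stages sparse enough that their non-decaying Dirichlet contribution does not spoil the decay; with high probability this yields $|\widehat\mu(\xi)| \lesssim_\varepsilon |\xi|^{-(\alpha-\varepsilon)/2}$ for every $\varepsilon > 0$.

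For the Knapp computation, $\|f_\ell\|_{L^2(d\mu)}^2 = \mu(A_\ell) \approx M_\ell t_0^{-\ell}$, and for $|\xi| \lesssim n_0^\ell$ the phase $e^{-2\pi i \xi(x - y_j)}$ is nearly constant on each $I_j$, so
\begin{equation*}
\widehat{f_\ell d\mu}(\xi) \;\approx\; t_0^{-\ell} \sum_{j=1}^{M_\ell} e^{-2\pi i \xi y_j}.
\end{equation*}
The exponential sum has magnitude $\approx M_\ell$ on resonant bands $\xi \in \Delta_\ell^{-1}\mathbb{Z}$ of width $(M_\ell \Delta_\ell)^{-1}$, and integrating over these bands gives
\begin{equation*}
\|\widehat{f_\ell d\mu}\|_p \;\gtrsim\; M_\ell^{(p-1)/p}\, n_0^{\ell/p}\, t_0^{-\ell},
\end{equation*}
so the ratio in \eqref{restriction-fail} is at least a constant times $M_\ell^{(p-2)/(2p)} n_0^{\ell/p} t_0^{-\ell/2}$. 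Using $\log t_0 = \alpha\log n_0$, this is unbounded in $\ell$ precisely when $M_\ell \gg n_0^{\ell(\alpha p - 2)/(p - 2)}$; a short calculation shows this exponent equals $\alpha/2$ at the endpoint $p = 4/\alpha - 2$ and is strictly smaller below, so $M_\ell = t_0^{\gamma\ell}$ for some $\gamma < 1/2$ (chosen depending on the target $p$) is enough.

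The hard part is executing the construction so that both \eqref{e-salem} and the AP requirement hold at the same time: the AP stages that fuel the Knapp example inject Dirichlet-kernel factors into $\widehat\mu$ that do not decay, so obtaining $|\widehat\mu(\xi)| \lesssim_\varepsilon |\xi|^{-(\alpha-\varepsilon)/2}$ while producing APs of length $t_0^{\gamma\ell}$ with $\gamma$ close to $1/2$ requires a delicate density balance between AP and random stages. Carrying out the probabilistic large-deviation bookkeeping across all dyadic frequency shells, so that the product expansion of $\widehat\mu$ survives the injected AP factors for every $\beta < \alpha$, is the technical core of the proof.
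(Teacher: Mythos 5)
Your overall strategy---a random Cantor iteration carrying an embedded arithmetically structured family of about $t_0^{\ell/2}$ level-$\ell$ intervals, tested against the characteristic function of that family---is indeed the paper's strategy, and your exponent bookkeeping correctly identifies size $t_0^{\ell/2}$ as the critical amount of structure at the endpoint $p=4/\alpha-2$. However, the specific object your argument rests on, a \emph{genuine} arithmetic progression $I_1,\dots,I_{M_\ell}$ of level-$\ell$ intervals with common spacing $\Delta_\ell$ and $M_\ell\to\infty$, cannot exist inside the support of a measure obeying (\ref{e-ball}) with $\alpha<1$. Indeed, choose $k\le\ell$ with $\delta:=n_0^{-k}\in[\Delta_\ell,\,n_0\Delta_\ell)$. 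Each surviving level-$k$ interval carries mass $t_0^{-k}=\delta^{\alpha}$, and the $\delta$-neighbourhood of the convex hull of $\bigcup_j I_j$ has length $O(M_\ell\delta)$, so (\ref{e-ball}) allows at most $O(M_\ell^{\alpha})$ level-$k$ intervals to meet it; on the other hand, each level-$k$ interval has length $<n_0\Delta_\ell$ and so contains at most $n_0+1$ of the $\Delta_\ell$-separated left endpoints $y_j$, all of which lie in $E\subset E_k$, so at least $M_\ell/(n_0+1)$ such intervals are needed. Hence $M_\ell^{1-\alpha}=O(n_0)$ and $M_\ell$ is bounded independently of $\ell$. (This is precisely the one-dimensional reason there is no literal Knapp cap here: a set of upper regularity $\alpha<1$ cannot be ``flat'' along a long progression.) With $M_\ell=O(1)$, the resonant-band computation of $\|\widehat{f_\ell d\mu}\|_p$, which depends entirely on the single Dirichlet kernel $\sum_j e^{-2\pi i\xi y_j}$ attaining size $M_\ell\to\infty$ on periodic bands, gives nothing, so both the construction of $f_\ell$ and the lower bound fail as stated; this is not the ``delicate bookkeeping'' you defer at the end but a structural impossibility.

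What does survive is a \emph{generalized} arithmetic progression: the paper takes $F_\ell$ to be an $\ell$-fold Cantor iteration of a single progression $P$ of length $t^{1/2}$, so the endpoint set $P_\ell$ is a GAP of rank $\ell$ and size $t^{\ell/2}$, which is compatible with (\ref{e-ball}). The price is that the exponential sum over $P_\ell$ is a product of $\ell$ Dirichlet kernels at different scales, and its $L^p$ mass can no longer be read off from resonant bands of one frequency. The paper instead bounds $\|\widehat{f_\ell d\mu_j}\|_{2r}^{2r}$ from below by the number of solutions of $a_1+\dots+a_r=a_{r+1}+\dots+a_{2r}$ with $a_i\in F_\ell\cap A_j$ (large because the $r$-fold sumset of a GAP is small, costing only $r^{\ell}$, which is absorbed by taking $n_0$ large), using positivity of $\widehat{\sinc^{2r}}$, and then descends to all $1\le p<2r$ via $\|\phi\|_{2r}^{2r}\le\|\phi\|_p^p\|\phi\|_\infty^{2r-p}$. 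Your mechanism for preserving (\ref{e-salem}) would also need to change: making entire sparse generations deterministic degrades the Fourier decay unless those generations have density zero, which leaves far too little structure at scale $\ell$; the paper instead modifies, at \emph{every} generation, only the $t^{j/2}$ elements of $A_j$ lying in $P_j$, a perturbation of the relevant exponential sums of size $t^{-(j+1)/2}$, i.e.\ of the same order as the random fluctuations, and hence harmless.
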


The set of $\alpha$ in the assumptions of the theorem is dense in $(0,1)$.
It is likely that the construction could be modified to yield such a measure and sequence of functions for every $\alpha\in(0,1)$, but this would not strengthen our conclusions significantly, considering that for a fixed $p$ the relevant range of $\alpha$ is given by a strict inequality and that in any event we cannot produce a measure with $\alpha=\beta=\alpha_0$.

The Salem set $E$ will be constructed via a randomized Cantor iteration. The main idea is that, while Salem sets are random overall, they may nonetheless contain much smaller sets that come close to being arithmetically structured. In our case, $E$ will contain subsets $E\cap F_\ell$, where $F_\ell$ is a finite iteration of a smaller Cantor set with endpoints in a generalized arithmetic progression. The functions $f_\ell$ will then be characteristic functions of $F_\ell$. 

In a sense, this may be viewed as a one-dimensional analogue of Knapp's counterexample. The latter is based on the fact that an ``almost flat" spherical cap is contained in the curved sphere, or equivalently, that the sphere is tangent to a flat hyperplane. Here, the set $E$ may be thought of as random but nonetheless ``tangent" to the arithmetically structured sets $F_\ell$.

The construction of the Salem set $E$ is similar to that in \cite{LP}, but we have to be careful to make sure that the inclusion of the sets $E\cap F_\ell$ does not disturb the Fourier estimates. Our lower bound on $\| \widehat{f_\ell d\mu} \|_p$ relies on arithmetic arguments, specifically on counting solutions to linear equations in the set of endpoints of the Cantor intervals in the construction. 
Optimizing the parameters in the construction, we get Theorem \ref{main}.

If instead of Salem measures obeying (\ref{e-ball}) and (\ref{e-salem}) one considers more general measures on $\RR$ supported on sets of Hausdorff dimension $\alpha_0\in(0,1)$, then an example due to Chen \cite{chen} (based on the work of K\"orner \cite{korner}) shows that restriction estimates (\ref{estimate}) for such measures can in fact hold for all $p\geq 2/\alpha_0$. (K\"orner's measures do not necessarily obey (\ref{e-ball}) and (\ref{e-salem}) with $\alpha,\beta$ near $\alpha_0$, and it is not clear whether his construction can be modified to ensure these properties.)

It is still possible that {\it some} Salem sets do not contain structured subsets, and that the range of $p$ in (\ref{estimate}) can be improved for such sets. 
However, our result shows that Theorem \ref{Mockenhaupt} in its stated generality is optimal with regard to the range of $p$.


We also note that the same construction yields the following.

\begin{thm}\label{main-remark}
Let $\alpha$ be as in Theorem \ref{main}, and assume that the exponents $1\leq p,q<\infty$ obey
\begin{equation}\label{pq}
p<\frac{q(2-\alpha)}{\alpha(q-1)}
\end{equation}
Then there is a measure $\mu$ on $[0,1]$ and a sequence of functions $\{f_\ell\}_{\ell \in\NN}$, constructed as in the proof of Theorem \ref{main}, such that
\begin{equation}\label{restriction-fail-pq}
\frac{\| \widehat{f_\ell d\mu} \|_{L^{p}(\mathbb{R}^n)}}{\| f_\ell \|_{L^q(d\mu)}} \rightarrow \infty
\ \ \text{as}\ \ \ell\to\infty.
\end{equation}
\end{thm}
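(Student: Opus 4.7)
The plan is to re-use the construction from the proof of Theorem \ref{main} verbatim and only to change the denominator of the restriction quotient. Since each $f_\ell=\chi_{F_\ell}$ is a characteristic function,
$$
\|f_\ell\|_{L^q(d\mu)} = \mu(F_\ell)^{1/q},
$$
so
$$
\frac{\|\widehat{f_\ell d\mu}\|_{L^p(\mathbb{R})}}{\|f_\ell\|_{L^q(d\mu)}}
\;=\; \frac{\|\widehat{f_\ell d\mu}\|_{L^p(\mathbb{R})}}{\mu(F_\ell)^{1/2}}\;\cdot\; \mu(F_\ell)^{\,1/2-1/q}.
$$
The first factor is exactly the quantity whose blow-up is proved in Theorem \ref{main}, and the second is a definite power of the Cantor scale $\delta_\ell$ (which tends to $0$). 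For $q>2$ the extra factor $\mu(F_\ell)^{1/2-1/q}$ tends to $0$ and shrinks the range of $p$; for $q<2$ it tends to $\infty$ and enlarges it, in both cases by a precise polynomial amount.

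To make this quantitative I would extract from the proof of Theorem \ref{main} explicit polynomial scalings
$$
\mu(F_\ell)\asymp \delta_\ell^{\gamma},\qquad \|\widehat{f_\ell d\mu}\|_{L^p(\mathbb R)}\gtrsim \delta_\ell^{\rho(p,\alpha)},
$$
where $\gamma>0$ is read off from the Cantor construction and $\rho(p,\alpha)$ comes from integrating, to the $p$-th power, a pointwise arithmetic-counting lower bound on $|\widehat{f_\ell d\mu}(\xi)|$ at the frequencies dual to the generalized arithmetic progression of Cantor endpoints. In these terms the ratio (\ref{restriction-fail-pq}) blows up precisely when
$$
\rho(p,\alpha) \;<\; \gamma/q.
$$
Since at $q=2$ this inequality must coincide with $p<4/\alpha-2$, the exponents $\rho$ and $\gamma$ are pinned down, and an elementary rearrangement turns the inequality at general $q$ into (\ref{pq}). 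One convenient sanity check is $q=1$, where the right-hand side of (\ref{pq}) is $+\infty$: indeed $\|f_\ell\|_{L^1(d\mu)}=\mu(F_\ell)$ is so small that the ratio blows up for every $p\geq 1$.

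The main obstacle is pure bookkeeping: one must confirm that the arithmetic lower bound from the proof of Theorem \ref{main} remains available for $p$ above $4/\alpha-2$ (which is allowed when $q<2$). However, that bound is derived by counting integer solutions to linear equations in the Cantor endpoints and is \emph{pointwise} on a set of frequencies of definite density, so it makes no reference to $p$. Consequently only the comparison with the $L^q$ denominator changes, and that is handled by the exponent computation above; no additional probabilistic or harmonic-analytic input beyond the proof of Theorem \ref{main} is needed.
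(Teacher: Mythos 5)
Your architecture is the paper's own: lower-bound $\|\widehat{f_\ell d\mu}\|_{L^p}$ by an explicit power of the scale, use $\|f_\ell\|_{L^q(d\mu)}^q=\mu(F_\ell)=t^{-\ell/2}$ (Lemma \ref{L2norm}), and compare exponents; with $t=N^\alpha$ the positivity of the net exponent is exactly (\ref{pq}), so the algebra you defer does close, and your $q=1$ sanity check is consistent. One substantive correction, though: your stated reason for why the numerator bound survives for $p\geq 4/\alpha-2$ (needed when $q<2$) is not what the construction delivers. There is no pointwise lower bound on $|\widehat{f_\ell d\mu}(\xi)|$ on a set of frequencies of positive density anywhere in the argument, and it is not clear one could be extracted. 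What is actually proved (Lemma \ref{fldmul} / Proposition \ref{prop-p}) is an $L^{2r}$ \emph{moment} lower bound for every integer $r>1/\alpha$, obtained by counting solutions of $a_1+\cdots+a_r=a_{r+1}+\cdots+a_{2r}$ in $F_\ell\cap A_j$ and using $\widehat{\sinc^{2r}}\geq 0$; the range $1\leq p<2r$ is then reached by the reverse H\"older step $\|\phi\|_{2r}^{2r}\leq\|\phi\|_p^p\,\|\phi\|_\infty^{2r-p}$ with $\|\widehat{f_\ell d\mu}\|_\infty\leq\mu(F_\ell)$. So to cover a given $(p,q)$ you must first fix $2r\geq q(2-\alpha)/(\alpha(q-1))$ and run the counting argument at that $r$. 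This is also not free of cost: the bound carries a factor $r^{-\ell-1}$, which forces $n_0$ (hence the measure $\mu$ itself) to be chosen large depending on $r$, i.e.\ on $(p,q)$ — a dependence your clean $\delta_\ell^{\rho(p,\alpha)}$ scaling hides, though it is permitted by the statement. Finally, ``pinning down'' $\rho(p,\alpha)$ by matching the $q=2$ case determines it only at the critical $p$; for general $q$ you must actually compute $\rho$ as a function of $p$, which is precisely the content of Proposition \ref{prop-p}.
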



\section{The construction of $\mu$}\label{construction of measure}

Let $N_0$ and $t_0$ be integers such that $1 < t_0 < N_0$, and let $\alpha = \log t_0 / \log N_0$. 
Let also $N = N_{0}^{2n_0}$ and $t = t_0^{2n_0}$, where $n_0$ is a large integer to be chosen later. Observe that $\log t / \log N=\alpha $ regardless of the value of $n_0$, so that we may freely assume that $n_0$ is large enough while keeping $\alpha$ fixed. For short, we will write $[N]=\{0,1,\ldots,N-1\}$.

We use $C$, $C'$, etc. to denote constants that may change from line to line. Whenever such constants depend on $n_0$ or on any of the running parameters $j,k,\ell,m$,  we will indicate this explicitly by writing, e.g., $C(n_0)$; all other constants may depend on $\alpha$, but are independent of $n_0, j,k,\ell,m$.

We will construct $\mu$ and $f_\ell$ simultaneously via a sequence of Cantor iterations. 
We will have a sequence of sets $A_0, A_1, A_2, \ldots$ satisfying
\begin{as*}
A_0 &= \cbr{0}, \\
A_{j+1} &= \bigcup_{a \in A_j} (a+A_{j+1,a}), \\
A_{j+1,a} &\subset N^{-(j+1)}[N] \\
|A_{j+1,a} | &= t
\end{as*}
Note that $A_j\subset N^{-j}\mathbb{Z}$ and $|A_j |= t^j$. The freedom in the construction comes in how we choose the subsets $A_{j+1,a} \subset N^{-(j+1)}[N]$; we can make separate choices for each $j$ and each $a \in A_j$.

Given such a sequence $A_j$, we define
\begin{as}
E_j = \bigcup_{a \in A_j} a+[0,N^{-j}], \qquad
E = \bigcap_{j=1}^{\infty} E_j.
\end{as}
Since $E_1 \supset E_2 \supset \cdots$, $E$ is a closed non-empty set.

There is a natural probability measure $\mu$ on $E$, defined as the weak limit of the absolutely continuous measures $\mu_j$ with densities
\begin{as}\label{densities}
\frac{d\mu_j}{dx} = \sum_{a \in A_j} t^{-j} N^j \mathbf{1}_{[a,a+N^{-j}]}.
\end{as}

\begin{lemma}\label{Lemma 6.1}
For any choice of $A_j$ as above, $E$ has Hausdorff dimension $\alpha$, and 
$\mu$ obeys 
\begin{as*}
\mu([x,x+\epsilon]) \leq C_1(n_0) \epsilon^{\alpha} \text{ for all } \epsilon>0.
\end{as*}
\end{lemma}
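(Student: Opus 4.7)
The plan is to prove both claims in a standard Cantor-set fashion: the upper bound on $\dim_H E$ is read directly from the construction, while the lower bound and the ball condition both follow from estimating $\mu_j$ on small intervals.

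First I would verify that $\mu$ is well-defined. Each $\mu_j$ is a probability measure since the density $t^{-j} N^j$ is supported on $t^j$ intervals of length $N^{-j}$. Moreover, the nested structure $A_{j+1} \subset A_j + N^{-(j+1)}[N]$ implies that for any $k \geq j$ and any interval $I_{j,a} = [a, a+N^{-j}]$ with $a \in A_j$, we have $\mu_k(I_{j,a}) = t^{-j}$. Thus $\mu_k$ is a Cauchy sequence in the weak-$*$ topology of measures on $[0,1]$ and the weak limit $\mu$ exists and is a probability measure with $\mu(I_{j,a}) = t^{-j}$ for every $a \in A_j$. In particular $\mu$ is supported on $E = \bigcap_j E_j$.

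Next I would establish the ball condition. Given $\epsilon > 0$, choose $j \geq 0$ so that $N^{-(j+1)} < \epsilon \leq N^{-j}$. Since every interval in $E_j$ has length $N^{-j} \geq \epsilon$, the interval $[x, x+\epsilon]$ can meet at most two of the intervals $I_{j,a}$, whence
\begin{equation*}
\mu([x, x+\epsilon]) \leq 2 t^{-j} = 2 N^{-j\alpha} \leq 2 N^{\alpha} \epsilon^{\alpha}.
\end{equation*}
Setting $C_1(n_0) = 2N^{\alpha} = 2 N_0^{2 n_0 \alpha}$ gives the claimed inequality, with the $n_0$-dependence entering exactly as allowed.

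Finally, for the Hausdorff dimension: the upper bound $\dim_H E \leq \alpha$ follows from using $E_j$ as a cover, since $\sum_{a \in A_j} (N^{-j})^s = t^j N^{-js} = N^{j(\alpha-s)} \to 0$ for any $s > \alpha$. The lower bound $\dim_H E \geq \alpha$ follows from the ball condition by the mass distribution principle (Frostman's lemma): $\mu$ is a positive measure on $E$ with $\mu([x,x+\epsilon]) \leq C_1(n_0) \epsilon^{\alpha}$, so $\dim_H E \geq \alpha$. There is no real obstacle here; the only small subtlety is tracking that the constant $C_1$ depends on $n_0$ through $N = N_0^{2n_0}$, which is already anticipated by the notation in the statement.
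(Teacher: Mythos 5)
Your argument is correct and is exactly the standard one the paper has in mind when it writes ``This is standard'' and defers to Lemma 6.1 of the \L{}aba--Pramanik paper: the ball condition follows from $\mu(I_{j,a})=t^{-j}=N^{-j\alpha}$ plus counting how many level-$j$ intervals a set of length $\epsilon$ can meet, and the dimension statement follows from the natural covers (upper bound) and the mass distribution principle (lower bound). No issues beyond trivial bookkeeping (e.g.\ an interval of length $\epsilon\le N^{-j}$ can touch three level-$j$ intervals at endpoints, which costs only a harmless factor).
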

\begin{proof}
This is standard. See, for example, Lemma 6.1 in \cite{LP}.
\end{proof}

We will also construct sequences of sets $P_j\subset A_j$ and $F_j\subset E_j$ so that:
\begin{itemize}
\item $P_0 = \cbr{0}$
\item $P_{j+1} = \bigcup_{a \in P_{j}} (a + N^{-(j+1)}P)$ for $j=0,1,2,\ldots$,
where $P \subset \cbr{0,1,\ldots,N-1}$ is an arithmetic progression of length $t^{1/2}=t_0^{n_0}$
\item $F_j= \bigcup_{a \in P_j} a+[0,N^{-j})$.
\end{itemize}
Note that $|P_j |= t^{j/2}$. We also define 
$$f_\ell=\one_{F_\ell}.$$

The main result of this section is the following. 

\begin{prop}\label{prop-main}
Assume that $n_0$ is sufficiently large. There is a choice of $A_j$, $j=1,2,\dots$, with the above properties such that for every $0<\beta<\alpha$ we have
\begin{equation}\label{mu-decay}
|\hat{\mu}(k)| \leq C(\beta,n_0) |k|^{-\beta/2} \qquad (k \in \mathbb{Z}\setminus\{0\}),
\end{equation}
\begin{equation}\label{flmuj-decay}
|\widehat{f_\ell \mu_j}(k)| \leq C(\beta,\ell,n_0) |k|^{-\beta/2} \qquad (k \in \mathbb{Z}\setminus\{0\}, j \geq \ell),
\end{equation}
\end{prop}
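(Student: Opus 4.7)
The plan is to construct $\mu$ via a randomized Cantor iteration into which the deterministic substructure $P_j$ is embedded. For each $j\ge 0$ and each $a\in A_j$ I would choose $A_{j+1,a}$ independently as follows: if $a\in P_j$, set $A_{j+1,a}=N^{-(j+1)}(P\cup R_{j+1,a})$ with $R_{j+1,a}$ uniform on $(t-t^{1/2})$-subsets of $[N]\setminus P$; otherwise set $A_{j+1,a}=N^{-(j+1)}R_{j+1,a}$ with $R_{j+1,a}$ uniform on $t$-subsets of $[N]$. This forces $P_j\subset A_j$ at every level while retaining enough independence to run the Fourier estimates, and I would aim to show that (\ref{mu-decay}) and (\ref{flmuj-decay}) hold simultaneously with probability one, so that a single realisation of the $A_j$ suffices.

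For (\ref{mu-decay}) I would follow the template of Lemma~6.1 in \cite{LP}. Write $\widehat{\mu_j}(k)=t^{-j}\,\widehat{\chi_j}(k)\,S_j(k)$ with $\chi_j=N^j\one_{[0,N^{-j}]}$ and $S_j(k)=\sum_{a\in A_j}e^{-2\pi ika}$, and telescope
\[
S_{j+1}(k)=\sum_{a\in A_j}e^{-2\pi ika}\,\Sigma_{j+1,a}(k),\qquad \Sigma_{j+1,a}(k)=\sum_{b\in A_{j+1,a}}e^{-2\pi ikb}.
\]
Each $\Sigma_{j+1,a}$ splits as a small deterministic part (from the forced copy of $P$ when $a\in P_j$) plus an independent zero-mean sum of $O(t)$ unimodular terms, to which a Bernstein/Hoeffding concentration bound applies. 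At the natural dyadic scale $|k|\asymp N^j$ this yields $|S_j(k)|\le C(\beta,n_0)\,t^j\,N^{j\beta/2}\,|k|^{-\beta/2}$ off an event of probability at most $\exp(-cn_0\log|k|)$. A union bound over $k\in\ZZ$ and $j\ge 1$ together with Borel--Cantelli gives (\ref{mu-decay}) for $\widehat{\mu_j}$ almost surely, and weak convergence $\mu_j\to\mu$ combined with the pointwise continuity of the Fourier transforms of finite measures transfers the bound to $\widehat{\mu}$.

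For (\ref{flmuj-decay}), note that $f_\ell\mu_j$ is supported on the $t^{j-\ell/2}$ level-$j$ descendants of $P_\ell$, whence
\[
\widehat{f_\ell\mu_j}(k)=t^{-j}\,\widehat{\chi_j}(k)\sum_{b\in P_\ell}e^{-2\pi ikb}\,T_b(k),\qquad T_b(k)=\sum_{a\in\mathrm{desc}_j(b)}e^{-2\pi ik(a-b)}.
\]
The subtree rooted at each $b\in P_\ell$ is, by construction, an independent scale-$N^{-\ell}$ copy of the same constrained random Cantor iteration, so the preceding concentration argument applied to $T_b$ at the rescaled frequency $kN^{-\ell}$ yields $|T_b(k)|\le C(\beta,n_0)\,t^{j-\ell}\,N^{\ell\beta/2}\,|k|^{-\beta/2}$ whenever $|k|\ge N^\ell$. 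Summing over the $|P_\ell|=t^{\ell/2}$ branches and absorbing the resulting $\ell$-dependent constant $t^{-\ell/2}N^{\ell\beta/2}$ into $C(\beta,\ell,n_0)$ gives the bound. For $1\le|k|<N^\ell$ the trivial estimate $|\widehat{f_\ell\mu_j}(k)|\le\|f_\ell\mu_j\|_1=t^{-\ell/2}$ is enough after the same absorption.

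The main obstacle is controlling the bias that the forced inclusion of $N^{-(j+1)}P$ creates in $\Sigma_{j+1,a}$. This deterministic contribution to $|\widehat{\mu_{j+1}}(k)|$ can be as large as $t^{-(j+1)/2}$, which at $|k|\asymp N^{j+1}$ must be dominated by $|k|^{-\beta/2}=t^{-(j+1)\beta/(2\alpha)}$. Since $\beta<\alpha$ the inequality goes the right way, but only narrowly, and closing it requires choosing $n_0$ large enough that $t^{1/2}=t_0^{n_0}$ swallows the polynomial and logarithmic losses incurred in the Hoeffding tails and the union bounds over $|k|\le N^{j+1}$. Once this quantitative choice has been made, everything else parallels the analysis in \cite{LP} essentially verbatim.
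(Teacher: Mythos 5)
Your overall strategy coincides with the paper's: a randomized Cantor iteration with the progression $P$ forced into the children of every $a\in P_j$, Bernstein-type concentration plus a union bound over the (periodically reduced) frequencies $k\in[N^{j+1}]$, and the observation that the deterministic bias of relative size $t^{-(j+1)/2}$ created by the forced copies of $P$ is dominated by $|k|^{-\beta/2}$ precisely because $\beta<\alpha$. Two implementation choices differ. First, the paper does not draw an independent $t$-subset for each child: it first builds a single set $B_{j+1}$ all of whose $N$ rotations have exponential sums within $\eta_j\approx t^{-1/2}\sqrt{\log}$ of the mean (Lemma \ref{Lemma 6.2}), and then randomizes only over the rotation $x(a)$. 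This yields the deterministic bound $|\chi_a(k)|\le\eta_j$ on each summand, so the second application of Bernstein to $\sum_{a\in A_j}\chi_a(k)$ is clean. Your one-step randomization can be made to work, but only if concentration is applied to the aggregate $\sum_{a\in A_j}\bigl(\Sigma_{j+1,a}(k)-\mathbb{E}\Sigma_{j+1,a}(k)\bigr)e^{-2\pi ika}$; bounding each $\Sigma_{j+1,a}$ separately and summing the bounds loses the square-root cancellation across $a$ and produces an error of order $t^{-1/2}$ per level, which does not decay in $j$. (Also, uniform $t$-subsets are not coordinatewise independent; the paper uses Bernoulli selection with $p=t/N$ followed by a cardinality adjustment.) Second, for (\ref{flmuj-decay}) the paper telescopes in $j$ exactly as for $\mu$, controlling the sparse sums $\sum_{a\in F_\ell\cap A_j}\chi_a(k)$ through the extra events $\mathcal{E}_\ell$, $\ell\le j$ (union bound over $\ell$ at cost $1/(2j)$ each), rather than decomposing into subtrees rooted at $P_\ell$. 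Your subtree route is viable but note that the rescaled frequency $kN^{-\ell}$ is not an integer, so the periodicity reduction must be run for the unrescaled sums $T_b(k)$, $k\in[N^j]$, and the subtrees are themselves constrained (they still carry forced copies of $P$ at all deeper levels), so they satisfy the same, not a better, estimate.

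One displayed claim is off as written: the bound $|S_j(k)|\le C\,t^j N^{j\beta/2}|k|^{-\beta/2}$ is trivially true at $|k|\asymp N^j$ (it reads $|S_j(k)|\le Ct^j$) and does not imply (\ref{mu-decay}). What the argument actually needs, and what the paper proves, is the increment bound $|\widehat{\mu_{j+1}}(k)-\widehat{\mu_j}(k)|\le C\min\bigl(1,N^{j+1}/|k|\bigr)t^{-(j+1)/2}\ln(8N^{j+1})$, which is then summed over $j$ (Lemma \ref{Lemma 6.5}); the factor $\min(1,N^{j+1}/|k|)$ coming from $\widehat{\mathbf{1}_{[0,N^{-j-1}]}}$ is essential to make the tail $j\lesssim \log|k|/\log N$ converge. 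Finally, your Borel--Cantelli framing requires failure probabilities summable over $j$ (and over $\ell\le j$), whereas the paper only needs positive probability of success at each level conditional on the previous ones; either works, but the former forces slightly larger constants in $\lambda_j$. With these repairs your argument closes, and the mechanism you isolate at the end --- $t^{-(j+1)/2}$ versus $t^{-(j+1)\beta/(2\alpha)}$ --- is indeed the heart of the matter.
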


\begin{proof}
Our starting point is the construction of Salem sets in \cite{LP}, Section 6. We will modify it to make $A_j$ contain the structured sets $P_j$ while also preserving the Fourier estimates (\ref{mu-decay}), (\ref{flmuj-decay}). We will proceed by induction. Define $A_0 = \cbr{0}$, and let $A_1\subset N^{-1}[N]$ be an arbitrary set of cardinality~$t$ so that $P_1\subset A_1$. Assuming that $j\geq 1$ and that $A_j$ is given so that $P_j \subset A_j$, we define $A_{j+1}$ by constructing $A_{j+1,a}$ for each $a \in A_j$. 

 If $A \subset \mathbb{R}$ is a finite set, we will write for $k \in \mathbb{Z}$
\begin{as*}
S_A(k) = \sum_{a \in A} e^{-2\pi i a k}.
\end{as*} 

The outline is as follows. We first construct a set $B_{j+1}\subset  N^{-(j+1)}[N]$ so as to minimize the differences
\begin{equation}\label{b-e1}
\Big|\frac{1}{t}S_{B_{j+1}}(k)-\frac{1}{N}S_{N^{-(j+1)}[N]}(k)\Big|
\end{equation}
for $k\in\mathbb{Z}$, subject to the constraint that $|B_{j+1}|=t$. Moreover, we will want (\ref{b-e1}) to be similarly small if $B_{j+1}$ is replaced by any of its ``rotated" copies $B_{j+1,x}$ with $x\in[N]$ (the terminology will be explained shortly). These sets will serve as our initial candidates for $A_{j+1,a}$. Next, we choose the ``rotations" $x(a)$ for $a \in A_j$ so as to minimize the Fourier coefficients of the next generation Cantor sets with $B_{j+1,x(a)}$ used in place of $A_{j+1,a}$.

Finally, recall that we had $P_j\subset A_j$. For each $a\in P_j$, we add $N^{-(j+1)}P$ to $B_{j+1,x(a)}$, then subtract a matching number of elements of $B_{j+1,x(a)}$ that are not in $N^{-(j+1)}P$, so that the resulting set has cardinality $t$ again. This will be $A_{j+1,a}$ for $a\in P_j$. For $a\in A_j\setminus P_j$, we simply let $A_{j+1,a}=B_{j+1,x(a)}$. We will prove that these modifications can be made without destroying the Fourier estimates.

We now turn to the details. As in \cite{LP}, we will need Bernstein's inequality (see e.g. \cite{B}).

\begin{lemma}[Bernstein's inequality]\label{Lemma 6.3}
Let $X_1, \ldots, X_n$ be independent complex-valued random variables with $|X_j| \leq 1$, $\mathbb{E}X_i = 0$, and $\mathbb{E}|X_j|^2 = \sigma_j^2$. Let $\sigma > 0$ be such that $\sigma^2 \geq \sum_{j=1}^{n} \sigma_j^2$ and $\sigma^2 \geq 6 n \lambda$. Then
\begin{as*}
\mathbb{P}\rbr{\abs{\sum_{j=1}^{n} X_j} \geq n\lambda} \leq 4 \exp\rbr{-\frac{n^2\lambda^2}{8\sigma^2}}.
\end{as*}
\end{lemma}

Define $\eta_j > 0$ by
\begin{equation}\label{eta}
\eta_j^2 = 192 t^{-1}\ln(8N^{j+2}).
\end{equation}

\begin{lemma}\label{Lemma 6.2}
There is a set $B_{j+1} \subset N^{-(j+1)}[N]$ with $|B_{j+1} |= t$ such that
\begin{as}\label{e-62}
\abs{\frac{S_{B_{j+1,x}}(k)}{t} - \frac{S_{N^{-(j+1)}[N]}(k)}{N}} \leq \eta_j
\end{as}
for all $k \in \mathbb{Z}$ and $x \in \cbr{0,1,\ldots,N-1}$. Here
\begin{as*}
B_{j+1,x} = \cbr{ \frac{(x+y)\pmod{N}}{N^{j+1}} : \frac{y}{N^{j+1}} \in B_{j+1} }.
\end{as*}
\end{lemma}

\begin{proof}
This is Lemma 6.2 of \cite{LP}; we include the proof because it is short and provides a good warm-up for the main argument.

If $j$ is large enough so that $\eta_j\geq 2$, then we may choose $B_{j+1}$ to be an arbitrary subset of $N^{-(j+1)}[N]$ of cardinality $t$. Then (\ref{e-62}) holds trivially, since each term on the left side of (\ref{e-62}) is bounded by 1 in absolute value. Assume therefore that $\eta_j\leq 2$.

Let $B_{j+1} \subset N^{-(j+1)}[N]$ be a random set constructed by stipulating that for each $b\in N^{-(j+1)}[N]$ the probability that $b \in B_{j+1}$ is $p=t/N$.

Fix $k \in \mathbb{Z}$ and $x \in [N]$. For each $b \in N^{-(j+1)}[N]$, define the random variable $X_b(k,x) = (\mathbf{1}_{B_{j+1,x}}(b) - p)e^{-2 \pi i b k}$. The $X_b(k,x)$'s satisfy $\mathbb{E}_b X_b(k,x)=0$ and $\mathbb{E}_b |X_b(k,x)|^2 = p(1-p)$. Set $\sigma^2 =6 t$, $n = N$, and $\lambda = \eta_j p / 2$. Then $\sigma^2 \geq \sum_{b \in N^{-(j+1)}[N]} \mathbb{E}_b |X_b(k,x)|^2$, and $\sigma^2 \geq 6n\lambda=3\eta_j t$. 

We apply Lemma \ref{Lemma 6.3} to the $X_b(k,x)$'s.
Since
\begin{as*}
\frac{S_{B_{j+1,x}}(k)}{t} - \frac{S_{N^{-(j+1)}[N]}(k)}{N} = t^{-1}\sum_{b \in N^{-(j+1)}[N]} X_b(k,x),
\end{as*}
and
$$
4\exp\rbr{-\frac{n^2 \lambda^2}{8\sigma^2}} = 4\exp\rbr{-\ln(8N^{j+2})} = \frac{1}{2N^{j+2}},
$$
Lemma \ref{Lemma 6.3} gives
\begin{equation}\label{e-ber1}
\mathbb{P}\rbr{\abs{\frac{S_{B_{j+1,x}}(k)}{t} - \frac{S_{N^{-(j+1)}[N]}(k)}{N}} \geq \frac{\eta_j}{2}} 
=\frac{1}{2N^{j+2}}
\end{equation}
for fixed $k \in \mathbb{Z}$ and $x \in [N]$. Since $S_{B_{j+1,x}}(k)$ and $S_{N^{-(j+1)}[N]}(k)$ are periodic with period $N^{j+1}$, it suffices to consider $k \in \cbr{0,1,\ldots, N^{j+1}-1}$. Thus the probability that the event in (\ref{e-ber1}) occurs for some $k \in \mathbb{Z}$ and $x \in \cbr{0,1,\ldots,N-1}$ is bounded by $1/2$.

Hence with positive probability we have
\begin{as}\label{Lemma 6.3 proof 1}
\abs{\frac{S_{B_{j+1,x}}(k)}{t} - \frac{S_{N^{-(j+1)}[N]}(k)}{N}} \leq \frac{\eta_j}{2}
\end{as}
for all $k \in \mathbb{Z}$ and $x \in[N]$.
When $k=0$ and $x=0$, \eq{Lemma 6.3 proof 1} says $||B_{j+1} |- t| \leq \eta_j t/2$. Therefore, by either adjoining to $B_{j+1}$ or removing from it at most $\eta_j t / 2$ elements, we get a set of cardinality exactly $t$ obeying \eq{e-62} for all $k,x$ as above.
\end{proof}

The main step in the proof of Proposition \ref{prop-main} is the following lemma.

\begin{lemma}
\label{Lemma 6.4}
There is a choice of the rotations $x(a)$, $a \in A_j$, such that
\begin{as}\label{Lemma 6.4 1}
\abs{\widehat{\mu_{j+1}}(k) - \widehat{\mu_{j}}(k)} \leq C\min\rbr{1,\frac{N^{j+1}}{|k|}} t^{-(j+1)/2} \ln(8N^{j+1}).
\end{as}
for all $k \in \mathbb{Z}$, $j \geq 1$, and 
\begin{as}\label{Lemma 6.4 2}
\abs{\widehat{f_\ell\mu_{j+1}}(k) - \widehat{f_\ell \mu_{j}}(k)} \leq C\min\rbr{1,\frac{N^{j+1}}{|k|}} t^{-(j+1)/2} \ln(8N^{j+1}).
\end{as}
for all $k \in \mathbb{Z}$, $j \geq 2$, and $\ell \in \cbr{1,\ldots,j}$.
\end{lemma}


\begin{proof}
{\bf Step 1.} 
Consider the random variables
\begin{as*}
\chi_a(k) = e^{-2\pi i k a} \rbr{ \frac{S_{B_{j+1,x(a)}}(k)}{t} - \frac{S_{N^{-(j+1)}[N]}(k)}{N} },
\ \ a\in A_j,\ k\in\ZZ,
\end{as*}
where for each $a \in A_j$ we choose $x(a)$ (the same for all $k$) independently and uniformly at random from the set $[N]$. Let $c$ be a large constant.
We claim that there is a choice of $x(a)$ such that 
\begin{as}\label{Lemma 6.4 3}
\abs{ t^{-j} \sum_{a \in A_j}  \chi_a(k)  } < \lambda_j:= ct^{-(j+1)/2}\ln(8N^{j+1})
\end{as}
for all $k \in \mathbb{Z}$ and 
\begin{as}\label{Lemma 6.4 4}
\abs{ t^{-j+\ell/2} \sum_{a\in F_\ell\cap A_j} \chi_{a}(k) } < \lambda_{j,\ell}:= ct^{-\frac{j+1}{2}+\frac{\ell}{4}}\ln(8N^{j+1})
\end{as}
for all $k \in \mathbb{Z}$ and all $\ell \in \cbr{1,\ldots,j}$. 

Consider the following events:
\begin{itemize}
\item $\mathcal{E}$ is the event that $\abs{ t^{-j} \sum_{a \in A_j} \chi_{a}(k) } \geq \lambda_j$ for some $k \in \mathbb{Z}$,
\item $\mathcal{E}_\ell$ is the event that $\abs{ t^{-j+\ell/2} \sum_{a \in F_\ell \cap A_j} \chi_{a}(k) } \geq \lambda_{j,\ell}$ for some $k \in \mathbb{Z}$.
\end{itemize}
We will prove that $\mathbb{P}(\mathcal{E})<1/2$ and $\mathbb{P}(\mathcal{E}_\ell)<1/(2j)$ for $\ell=1,2,\dots,j$. Since the failure of $\mathcal{E}$ implies \eq{Lemma 6.4 3}, and the failure of all 
$\mathcal{E}_\ell$ with $\ell=1,2,\dots,j$ implies \eq{Lemma 6.4 4}, there must be a choice of $x(a)$ for which both  \eq{Lemma 6.4 3} and \eq{Lemma 6.4 4} hold.

We begin with $\mathcal{E}$. By periodicity, it suffices to consider $k\in[N^{j+1}]$.
The random variables $\chi_a(k)$, $a \in A_j$, are independent and have expectation $\mathbb{E}\chi_a(k) = 0$. By Lemma \ref{Lemma 6.2}, $|\chi_a(k)| \leq \eta_j$.
With $n=t^j$ and $\sigma^2 = cn\eta_{j}^2 = 192c t^{j-1} \ln(8N^{j+2})$, we have $\sigma^2 \geq \sum_{a \in A_j} \mathbb{E}|\chi_a(k)|^2$ and $\sigma^2 \geq 6n\lambda_j$. Therefore, by Lemma \ref{Lemma 6.3}, we have for each fixed $k$
\begin{as*}
\mathbb{P}\rbr{ \abs{ t^{-j} \sum_{a \in A_j} \chi_{a}(k) } \geq \lambda_j} 
\leq 4\exp
\left(-\frac{\lambda_j^2\, t^{2j}}{8\sigma^2}\right). 
\end{as*}
Hence $\mathcal{E}$ has probability at most $4N^{j+1}\exp
\left(-{\lambda_j^2\, t^{2j}}/{8\sigma^2}\right)$, which is less than $1/2$ if $c \geq 3072$.

Next, we turn to $\mathcal{E}_\ell$. Again, let $k\in[N^{j+1}]$. We apply Bernstein's inequality as before, but this time with $n=|F_\ell\cap A_j|=t^{\ell/2}t^{j-\ell}=t^{j-\ell/2}$ and $\sigma^2 = cn\eta_{j}^{2}
= 192 c t^{j - \ell/2 - 1} \ln(8N^{j+2})$. We get that
\begin{as*}
\mathbb{P}\rbr{ \abs{ t^{-j+\ell/2} \sum_{a \in F_\ell\cap A_j} \chi_{a}(k) } \geq \lambda_{j,\ell}} 
\leq 4\exp
\left(-\frac{\lambda_{j,\ell}^2\, t^{2j-\ell}}{8\sigma^2}\right). 
\end{as*}
Hence $\mathcal{E}_{\ell}$ has probability at most $4N^{j+1}\exp
\left(-{\lambda_{j,\ell}^2\, t^{2j-\ell}} / {8\sigma^2}\right)$, 
which is less than $1/2j$ if $c \geq 6144$.

\bigskip\noindent
{\bf Step 2. }
Define $A_{j+1}$ as follows.
Recall that $P_j\subset A_j$. For each $a\in P_j$, construct $A_{j+1,a}$ by 
adjoining $N^{-(j+1)}P$ to $B_{j+1,x(a)}$ with $x(a)$ chosen as in Step 1, then subtract a matching number of elements of $B_{j+1,x(a)}$ that are not in $N^{-(j+1)}P$, so that $N^{-(j+1)}P\subset A_{j+1,a}$ and $|A_{j+1,a}|=t$.  For $a\in A_j\setminus P_j$, we let $A_{j+1,a}=B_{j+1,x(a)}$. 
We claim that
\begin{equation}\label{e-iter1}
\left| \frac{S_{A_{j+1}}(k)}{t^{j+1}} 
-  \sum_{a\in A_j} e^{-2\pi i k a} \ \  \frac{  S_{N^{-(j+1)}[N]}  (k) }{t^j N} 
\right|
\leq 2ct^{-(j+1)/2}\ln(8N^{j+1}),
\end{equation}

\begin{equation}\label{e-iter2}
\left| \frac{  S_{A_{j+1}\cap F_\ell}(k)   }{t^{j+1}} 
- \sum_{a\in A_j\cap F_\ell} e^{-2\pi i k a} \ \   \frac{ S_{N^{-(j+1)}[N]}  (k) }{t^jN}  
\right|
\leq 2ct^{-(j+1)/2}\ln(8N^{j+1}).
\end{equation}

To see this, first let $\tilde{A}_{j+1}=\bigcup_{a\in A_j} B_{j+1,x(a)}$. Then by (\ref{Lemma 6.4 3})
$$
\left| \frac{S_{\tilde{A}_{j+1}}(k)}{t^{j+1}} 
-  \sum_{a\in A_j} e^{-2\pi i k a} \ \  \frac{  S_{N^{-(j+1)}[N]}  (k) }{t^jN} 
\right|
=\abs{ t^{-j} \sum_{a \in A_j}  \chi_a(k)  } < \lambda_j.
$$
Since $A_{j+1}$ differs from $\tilde{A}_{j+1}$ by at most $t^{(j+1)/2}$ elements, we have
$$
\left| \frac{  S_{\tilde{A}_{j+1}}(k)     }{t^{j+1}} 
-\frac{S_{{A}_{j+1}}(k)   }{t^{j+1}} 
\right| \leq t^{-(j+1)/2}.
$$
and (\ref{e-iter1}) follows. 

Similarly, by (\ref{Lemma 6.4 4})
\begin{gather*}
\left| \frac{   S_{\tilde{A}_{j+1}\cap F_\ell}(k)       }{t^{j+1}} 
-   \sum_{a\in A_j\cap F_\ell} e^{-2\pi i k a}    \ \    \frac{  S_{N^{-(j+1)}[N]}  (k)   }{t^jN} 
\right|
=\abs{ t^{-j} \sum_{a \in A_j\cap F_\ell}  \chi_a(k)  } \\
< t^{-\ell/2}\lambda_{j,\ell}
= ct^{-\frac{j+1}{2}-\frac{\ell}{4}}\ln(8N^{j+1})
\end{gather*}
Since $A_{j+1}\cap F_\ell$ differs from $\tilde{A}_{j+1}\cap F_\ell$ by at most $t^{(j+1)/2}$ elements, the left side again differs from the left side of (\ref{e-iter2}) by at most $t^{-(j+1)/2}$,
so that (\ref{e-iter2}) follows.

\bigskip

{\bf Step 3.} We will first show that \eq{e-iter1} implies \eq{Lemma 6.4 1}. We have
\begin{as*}
\widehat{\mu_j}(k) 
&= N^jt^{-j} \sum_{a \in A_j} \int_{a}^{a+N^{-j}} e^{-2\pi i k x} dx  \\
&= \frac{1-e^{-2 \pi i k / N^j}}{2\pi i k / N^j} t^{-j} S_{A_{j}}(k) \\
&= \frac{1-e^{-2 \pi i k / N^{j+1} }}{2\pi i k / N^{j+1}} 
t^{-j} \sum_{a \in A_j} e^{-2\pi i k a} \frac{S_{N^{-(j+1})[N]     }(k)}{N}, \\
\end{as*}
and
\begin{as*}
\widehat{\mu_{j+1}}(k) 
= \frac{1-e^{-2 \pi i k / N^{j+1}}}{2\pi i k / N^{j+1}} t^{-(j+1)} S_{A_{j+1}}(k) 
\end{as*}
Therefore, 
\begin{as*}
|{\widehat{\mu_{j+1}}(k) - \widehat{\mu_{j}}(k)} |
&= 
\abs{ \frac{1-e^{-2\pi i k / N^{j+1}}}{2 \pi i k / N^{j+1}} }
\left| \frac{   S_{A_{j+1}}(k)   }{t^{j+1}} 
-   \sum_{a\in A_j} e^{-2\pi i k a}  \ \   \frac{S_{N^{-(j+1)}[N]}  (k)   }{t^jN} 
\right|
\\
&\leq 2 ct^{-(j+1)/2}\ln(8N^{j+2})
\abs{ \frac{1-e^{-2\pi i k / N^{j+1}}}{2 \pi i k / N^{j+1}} }
\\
\end{as*}
Estimating the last factor by $\min(1,N^{j+1}/\pi |k|)$, 
we get \eq{Lemma 6.4 1}.

Next, we show \eq{e-iter2} implies \eq{Lemma 6.4 2}. Let $\ell \in \cbr{1,\cdots,j}$. We have
\begin{as*}
\widehat{f_\ell d\mu_j}(k) 
&= \frac{1-e^{-2\pi i k /N^{j+1}}}{2\pi i k /N^{j+1}} 
 \frac{1}{t^jN} \sum_{a\in A_j\cap F_\ell} e^{-2\pi i k a} S_{N^{-(j+1)}[N]}  (k)
\end{as*}
and
\begin{as*}
\widehat{f_{\ell} d\mu_{j+1}}(k) 
= \frac{1-e^{-2\pi i k /N^{j+1}}}{2\pi i k /N^{j+1}} t^{-(j+1)} 
S_{A_{j+1} \cap F_{\ell}}(k).
\end{as*}
Then \eq{Lemma 6.4 2} follows as above, using \eq{e-iter2} instead of \eq{e-iter1}.


\end{proof}

\begin{lemma}[cf.  \cite{LP},   Lemma 6.5]\label{Lemma 6.5}
Assume that $n_0$ is large enough. For every 
$0 < \beta < \alpha$, there is a constant $C(n_0\beta)$ such that
\begin{as*}
\sum_{j=0}^{\infty}\min\rbr{1,\frac{N^{j+1}}{|k|}}t^{-(j+1)/2}\ln(8N^{j+1}) \leq C(n_0,\beta) |k|^{-\beta/2}
\end{as*}
for all $k \in \mathbb{Z}$, $k\neq 0$.
\end{lemma}
\begin{proof}
Split the sum as 
$
\sum_{j\leq \frac{\ln |k|}{\ln N}} + \sum_{j> \frac{\ln |k|}{\ln N}}
$ 
and estimate each term separately. For details, see the proof of Lemma 6.5 of \cite{LP}.
\end{proof}

We can now conclude the proof of Proposition \ref{prop-main}.
Since $\mu_j$ converges to $\mu$ weakly, $\hat{\mu}_j$ converges to $\hat{\mu}$ pointwise. Hence
$$
|\widehat{\mu}(k)| \leq |\widehat{\mu_1}(k)| + \sum_{j=1}^{\infty} |\widehat{\mu_{j+1}}(k) - \widehat{\mu_j}(k)|.
$$
The sum is bounded by $C(n_0,\beta) |k|^{-\beta/2}$, by Lemmas \ref{Lemma 6.4} and \ref{Lemma 6.5}, 
and we have 
\begin{as*}
|\hat{\mu_1}(k)| 
= \abs{\frac{1-e^{-2\pi i k/N}}{2\pi i k/N} \frac{1}{t} \sum_{a \in A_1} e^{-2\pi i a k} }
\leq \frac{C(n_0)}{|k|}.
\end{as*}
This proves \eq{mu-decay}.

To prove \eq{flmuj-decay}, we first note the inequality
\begin{align} \label{flmuj-decay-trivial}
|\widehat{f_{\ell} d\mu_{h}}(k)|
= \abs{ \frac{1-e^{-2\pi i k /N^{h}}}{2\pi i k /N^{h}} t^{-h} 
S_{A_{h} \cap F_{\ell}}(k) }
\leq
\frac{N^h t^{-h}}{\pi |k|}|A_{h} \cap F_{\ell}|
=
\frac{N^h t^{-\ell/2}}{\pi |k|}.
\end{align}
Then \eq{flmuj-decay} is immediate in case $j = \ell$. If $j > \ell$, we write
\begin{align*}
|\widehat{f_\ell d\mu_{j}}(k)|
\leq
|\widehat{f_{\ell} d\mu_{\ell}}(k)| + |\widehat{f_\ell d\mu_{\ell+1}}(k) - \widehat{f_\ell d\mu_{\ell}}(k)|
+
\sum_{i=\ell+1}^{j-1} |\widehat{f_\ell d\mu_{i+1}}(k) - \widehat{f_\ell d\mu_{i}}(k)|.
\end{align*}
Lemmas \ref{Lemma 6.4} and \ref{Lemma 6.5} imply the sum is bounded by $C(n_0,\beta) |k|^{-\beta/2}$. For the remaining terms, we use \eq{flmuj-decay-trivial}.


\end{proof}


\section{The estimates on $f_\ell$}\label{sec3}

We start with the easy part.

\begin{lemma}\label{L2norm}
For all $1\leq q<\infty$, we have 
$\|f_\ell\|_{L^q(d\mu)}^q=\mu(F_\ell)=t^{-\ell /2}.$
\end{lemma}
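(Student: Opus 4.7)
The plan is to observe that since $f_\ell = \mathbf{1}_{F_\ell}$ is a characteristic function, we have $|f_\ell(x)|^q = f_\ell(x)$ pointwise for any $1 \leq q < \infty$, so
\[
\|f_\ell\|_{L^q(d\mu)}^q = \int |f_\ell|^q \, d\mu = \int \mathbf{1}_{F_\ell} \, d\mu = \mu(F_\ell).
\]
This reduces the lemma to computing $\mu(F_\ell)$.

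Next, I would write $F_\ell$ as the disjoint union
\[
F_\ell = \bigsqcup_{a \in P_\ell} [a, a+N^{-\ell})
\]
of $|P_\ell| = t^{\ell/2}$ intervals. Since $P_\ell \subset A_\ell$, each such interval is one of the level-$\ell$ Cantor intervals, so by additivity
\[
\mu(F_\ell) = \sum_{a \in P_\ell} \mu([a,a+N^{-\ell})) = |P_\ell| \cdot t^{-\ell},
\]
provided each level-$\ell$ interval has $\mu$-mass exactly $t^{-\ell}$.

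The one step that requires a short argument is verifying that $\mu([a,a+N^{-\ell})) = t^{-\ell}$ for each $a \in A_\ell$. For this I would use the construction in Section~\ref{construction of measure}: for every $j \geq \ell$, the interval $[a,a+N^{-\ell})$ contains exactly $t^{j-\ell}$ points of $A_j$ (by the nested structure $A_{j+1} = \bigcup_{a' \in A_j}(a' + A_{j+1,a'})$ with $|A_{j+1,a'}|=t$), and the density formula \eqref{densities} gives $\mu_j([a,a+N^{-\ell})) = t^{j-\ell} \cdot t^{-j} = t^{-\ell}$. Passing to the weak limit (and noting that the boundary of $[a,a+N^{-\ell})$ is $\mu$-null because $\mu$ is supported on $E$, which is a Cantor-type set not containing these endpoint intervals in its interior in a way that charges them), we obtain $\mu([a,a+N^{-\ell})) = t^{-\ell}$. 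Combining gives $\mu(F_\ell) = t^{\ell/2}\cdot t^{-\ell} = t^{-\ell/2}$, which proves the lemma. There is no real obstacle here; the only subtlety is handling the weak-limit step cleanly, and this is entirely routine given the nested construction.
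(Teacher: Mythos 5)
Your proof is correct; the paper states this lemma without proof as ``the easy part,'' and your argument is exactly the routine one it has in mind: $|f_\ell|^q=f_\ell$ reduces everything to $\mu(F_\ell)$, and the nested structure $A_{j+1}=\bigcup_{a\in A_j}(a+A_{j+1,a})$ with $|A_{j+1,a}|=t$ gives $\mu_j([a,a+N^{-\ell}))=t^{-\ell}$ for every $a\in A_\ell$ and $j\ge\ell$. The only spot where your wording is vague is the weak-limit step: the clean justification is that $\partial[a,a+N^{-\ell})$ consists of two points and $\mu$ is non-atomic by the ball condition of Lemma \ref{Lemma 6.1} (take $\epsilon\to 0$ in $\mu([x,x+\epsilon])\le C_1(n_0)\epsilon^\alpha$), so the portmanteau theorem gives $\mu_j([a,a+N^{-\ell}))\to\mu([a,a+N^{-\ell}))$ directly.
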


\bigskip

Theorem \ref{main} will follow from this and Proposition \ref{prop-p} below.

\begin{prop}\label{prop-p}
Fix $r\in\NN$ with $r > \frac{1}{\alpha}$ and assume that $n_0$ is large enough (depending on $r$). Let $1\leq p\leq 2r$.
Then for all $\ell$ sufficiently large we have
\begin{equation}\label{est-p}
\norm{\widehat{f_\ell d\mu }}^{p}_{L^{p}(\mathbb{R})} 
\geq C(r) \frac{N^\ell r^{-\ell-1} }{ t^{\ell (p+1)/2}}.
\end{equation}
\end{prop}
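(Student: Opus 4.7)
The plan is to establish the bound at $p=2r$ via Plancherel combined with an additive-energy count that exploits the GAP structure of $P_\ell\subset F_\ell$, and then to deduce $1\le p<2r$ by a single H\"older interpolation. Lemma~\ref{Lemma 6.4} applied term by term yields $\norm{\widehat{f_\ell d\mu_{j+1}}-\widehat{f_\ell d\mu_j}}_{L^{2r}(\RR)}\lesssim_{\ell}\ln(8N^{j+1})\,(N^{1/(2r)}/t^{1/2})^{j+1}$, and because $r>1/\alpha$ the ratio is strictly less than $1$; hence $\{\widehat{f_\ell d\mu_j}\}$ is $L^{2r}$-Cauchy and converges to $\widehat{f_\ell d\mu}$, so it suffices to bound $\norm{\widehat{f_\ell d\mu_j}}_{2r}^{2r}$ from below uniformly for all large $j$.

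The explicit formula $\widehat{f_\ell d\mu_j}(\xi)=t^{-j}\sigma_j(\xi)S_{A_j\cap F_\ell}(\xi)$ with $\sigma_j(\xi)=(1-e^{-2\pi i\xi/N^j})/(2\pi i\xi/N^j)$, the substitution $\xi=N^j\eta$, and the $1$-periodicity of $\widehat{\one_{\tilde A}}(\eta)$ (with $\tilde A:=N^j(A_j\cap F_\ell)\subset\ZZ$) together with the fact that $\sum_{k\in\ZZ}\abs{\sinc(\eta+k)}^{2r}$ is a positive $1$-periodic function bounded above and below by $r$-dependent constants give
\[
\norm{\widehat{f_\ell d\mu_j}}_{2r}^{2r}\asymp_r t^{-2rj}N^j\,T_r(\tilde A),\quad T_r(\tilde A):=\#\cbr{(\vec a,\vec a')\in\tilde A^{2r}:\textstyle\sum a_i=\sum a_i'}.
\]
The task thus reduces to the additive-energy lower bound $T_r(\tilde A)\gtrsim_r r^{-\ell}\,t^{(2r-1)\ell/2}(t^{j-\ell})^{2r}/N^{j-\ell}$.

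Decompose $\tilde A=\bigsqcup_{p\in P_\ell}(N^{j-\ell}\tilde p+\tilde B_p)$ with $\tilde p:=N^\ell p\in\ZZ$ and $\tilde B_p\subset[0,N^{j-\ell})$ of size $t^{j-\ell}$. Restricting to tuples with macro match $\sum\tilde p_i=\sum\tilde p_i'$ and micro match $\sum b_i=\sum b_i'$, and enforcing the macro constraint via $\int_0^1 e^{2\pi i(\sum\tilde p_i-\sum\tilde p_i')\phi}d\phi=\one[\sum\tilde p_i=\sum\tilde p_i']$ combined with Parseval in the $\theta$-variable for the micro constraint, gives
\[
T_r(\tilde A)\ \ge\ \int_0^1\!\int_0^1\abs{G(\phi,\theta)}^{2r}d\phi\,d\theta,\quad G(\phi,\theta):=\sum_{p\in P_\ell}e^{2\pi i\tilde p\phi}\widehat{\one_{\tilde B_p}}(\theta).
\]
On the narrow strip $\abs{\theta}<\varepsilon/N^{j-\ell}$, each $\widehat{\one_{\tilde B_p}}(\theta)$ is within $2\pi\varepsilon t^{j-\ell}$ of its value $t^{j-\ell}$ at $\theta=0$, so $G(\phi,\theta)$ is within $2\pi\varepsilon t^{j-\ell/2}$ of $t^{j-\ell}\widehat{\one_{\tilde P_\ell}}(-\phi)$. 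Choosing $\varepsilon$ small enough that $\cbr{|\widehat{\one_{\tilde P_\ell}}|<4\pi\varepsilon t^{\ell/2}}$ absorbs at most half of $T_r(\tilde P_\ell)$ (using the $L^2$-identity $\int|\widehat{\one_{\tilde P_\ell}}|^2=t^{\ell/2}$ and the pointwise bound $|\widehat{\one_{\tilde P_\ell}}|^{2r}\le(4\pi\varepsilon t^{\ell/2})^{2r-2}|\widehat{\one_{\tilde P_\ell}}|^2$ on that set), one gets $T_r(\tilde A)\gtrsim_r (t^{j-\ell})^{2r}T_r(\tilde P_\ell)/N^{j-\ell}$. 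Once $rt^{1/2}<N$ (valid for $n_0$ large depending on $r$), the coordinate sums in the GAP representation of $\tilde P_\ell$ do not interfere, so $T_r(\tilde P_\ell)=T_r([t^{1/2}])^\ell$, and from $T_r([M])/M^{2r-1}\to\int_\RR\sinc^{2r}\gtrsim r^{-1/2}$ one extracts $T_r(\tilde P_\ell)\gtrsim C^\ell r^{-\ell/2}t^{(2r-1)\ell/2}$; the accumulated $r$-dependent constants are comfortably absorbed into the claimed $r^{-\ell-1}$.

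For $1\le p\le 2r$, H\"older's inequality $\norm{\widehat{f_\ell d\mu}}_{2r}^{2r}\le\norm{\widehat{f_\ell d\mu}}_p^p\,\norm{\widehat{f_\ell d\mu}}_\infty^{2r-p}$ together with the trivial bound $\norm{\widehat{f_\ell d\mu}}_\infty\le(f_\ell d\mu)(\RR)=t^{-\ell/2}$ (Lemma~\ref{L2norm} with $q=1$) converts the $p=2r$ estimate into the claimed $\norm{\widehat{f_\ell d\mu}}_p^p\gtrsim_r N^\ell r^{-\ell-1}/t^{\ell(p+1)/2}$, since $\ell(2r+1)/2-\ell(2r-p)/2=\ell(p+1)/2$. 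The main obstacle is the third step: individual micro counts for a fixed $(\vec p,\vec p')$ with $\sum\tilde p_i=\sum\tilde p_i'$ do not admit useful lower bounds one at a time, and it is only after summing them against the structured macro weights---encoded by the $\phi$-integration of $|G(\phi,\theta)|^{2r}$ together with the uniform control $G(\phi,\theta)\approx G(\phi,0)$ on a narrow $\theta$-strip---that the pseudo-randomness effect producing the $T_r(\tilde P_\ell)$ factor becomes controllable.
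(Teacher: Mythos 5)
Your overall architecture matches the paper's: pass from $\mu$ to $\mu_j$, express $\|\widehat{f_\ell d\mu_j}\|_{2r}^{2r}$ through the $\sinc^{2r}$ kernel as (a constant times) the additive energy $M_{N^j(F_\ell\cap A_j)}$, lower-bound that energy using the GAP structure of $P_\ell$, and finally descend from $p=2r$ to $1\le p<2r$ via $\|\phi\|_{2r}^{2r}\le\|\phi\|_p^p\|\phi\|_\infty^{2r-p}$ with $\|\widehat{f_\ell d\mu}\|_\infty\le t^{-\ell/2}$ — this last step is verbatim the paper's. The genuine divergence is in the energy count. The paper's Lemma \ref{tuples} is a two-line Cauchy--Schwarz: $M_Y\ge\|g\|_{\ell^1}^2/|Z|=|Y|^{2r}/|Z|$, where $Z$ is the $r$-fold sumset, whose size is bounded by $(rt^{1/2})^\ell rN^{j-\ell}$ directly from the (not necessarily unique) digit representation. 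Your two-scale argument — macro/micro matching, the generating function $G(\phi,\theta)$, and the narrow $\theta$-strip — is a valid alternative and correctly isolates the factor $T_r(\tilde P_\ell)(t^{j-\ell})^{2r}/N^{j-\ell}$, but it is substantially longer and buys nothing here; the paper's sumset bound already exploits exactly the same structure. Your handling of the limit $j\to\infty$ ($L^{2r}$-Cauchy from Lemma \ref{Lemma 6.4}, using $r>1/\alpha$ to make $N^{1/2r}/t^{1/2}<1$) also differs from the paper's (dominated convergence against the majorant $\min(1,C|\xi|^{-\beta/2})$ with $r>1/\beta$); both work, but note that Lemma \ref{Lemma 6.4} only controls integer frequencies, so you still need Lemma \ref{Lemma 9.A.4} (or the explicit $\sigma_j$-factorization) to get the $L^{2r}(\RR)$ bounds on the increments.

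Two quantitative points deserve correction. First, the exact factorization $T_r(\tilde P_\ell)=T_r([t^{1/2}])^\ell$ requires the digit sums not to carry, i.e.\ $r(t^{1/2}-1)d<N$ where $d$ is the (unspecified) common difference of $P$; the hypothesis $rt^{1/2}<N$ does not guarantee this. Fortunately only the inequality $T_r(\tilde P_\ell)\ge T_r(P)^\ell$ is needed, and that holds unconditionally since digit-wise solutions are a subset of all solutions. Second, and more seriously for the statement as written: your threshold $\varepsilon$ must satisfy $(4\pi\varepsilon)^{2r-2}\lesssim T_r(\tilde P_\ell)\,t^{-(2r-1)\ell/2}\approx c_r^\ell$ with $c_r<1$, so $\varepsilon$ is forced to be exponentially small in $\ell$, and the strip's measure $2\varepsilon/N^{j-\ell}$ then contributes an extra factor $c_r^{\ell/(2r-2)}$. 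The resulting constant is $c_r^{\ell(2r-1)/(2r-2)}$, which for small $r$ (e.g.\ $r=2,3$ with $c_r\approx \max(r^{-1},\tfrac12 r^{-1/2})$) falls below the claimed $r^{-\ell-1}$; it is not ``comfortably absorbed.'' This does not damage Theorems \ref{main} and \ref{main-remark}, since any $\ell$-exponential constant $c(r)^\ell$ independent of $n_0$ is swamped by $N^\ell$, but it means your argument proves Proposition \ref{prop-p} only with a degraded constant, whereas the paper's Cauchy--Schwarz route yields $r^{-\ell-1}$ with no such loss.
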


\bigskip\noindent
{\it Proof of Theorems \ref{main} and \ref{main-remark}, given Proposition \ref{prop-p}.}
Fix $r$ large enough so that $r > 1/\alpha$ and $2r \geq \frac{q(2-\alpha)}{\alpha(q-1)}$.
Applying Proposition \ref{prop-p}, we see that
(\ref{est-p}) holds for all $p$ as in (\ref{pq}). Hence
\begin{as*}
\frac{\| \widehat{f_\ell d\mu} \|_{L^{p}(\mathbb{R})}}{\| f_\ell \|_{L^q(d\mu)}} 
\geq C(r)
\left(\frac{N^\ell r^{-\ell-1} }{t^{\ell (p+1)/2}}\right)^{1/p}t^{\ell/2q}.
\end{as*}
After some algebra, this is seen to go to infinity provided that (\ref{pq}) holds and that $n_0$ is large enough depending on $p$.

\bigskip

It remains to prove Proposition \ref{prop-p}. This will occupy the rest of this section, and will be done in several steps. If $Y\subset\RR$ is a finite set and $r\in\NN$, we will write
$$
M_{Y}=\# \cbr{ (a_1,\ldots,a_{2r}) \in Y^{2r}:\  \textstyle{\sum_{i=1}^{r} a_i = \sum_{i=r+1}^{2r} a_{i}} }
$$

\begin{lemma}\label{tuples}
For every $j,\ell ,r \in \mathbb{N}$ such that $j\geq \ell$,
\begin{equation}\label{e-MF}
M_{F_\ell \cap A_j}
\geq  r^{-\ell -1}  t^{(2r - 1)\ell /2} \  \left( \frac{t^{2r}}{N} \right)^{j-\ell}.
\end{equation}
\end{lemma}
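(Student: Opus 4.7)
The plan is to prove the lemma by Cauchy--Schwarz, using the generalized arithmetic progression (GAP) structure of $P_\ell$ to bound the size of the $r$-fold sumset of $F_\ell\cap A_j$. Recall that $P_\ell=\sum_{k=1}^\ell N^{-k}P$, where $P\subset[N]$ is an arithmetic progression of length $m:=t^{1/2}$, and that $P_\ell\subset N^{-\ell}\mathbb{Z}$. Hence each $a\in F_\ell\cap A_j$ has a unique decomposition $a=p+q$ with $p\in P_\ell$ and $q\in[0,N^{-\ell})\cap N^{-j}\mathbb{Z}$, so that
\[
r\cdot(F_\ell\cap A_j)\ \subset\ rP_\ell + \bigl([0,rN^{-\ell})\cap N^{-j}\mathbb{Z}\bigr).
\]

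The second factor has at most $rN^{j-\ell}$ elements. For the first, note that $rP_\ell=\sum_{k=1}^\ell N^{-k}(rP)$ where the iterated sumset $rP$ is itself an arithmetic progression of length at most $r(m-1)+1\leq rm$. Thus $rP_\ell$ lies in a GAP with $\ell$ generators each of size $\leq rm$, giving the crude but sharp (for our purposes) bound $|rP_\ell|\leq(rm)^\ell = r^\ell t^{\ell/2}$, and therefore
\[
\bigl|r\cdot(F_\ell\cap A_j)\bigr|\ \leq\ r^{\ell+1}\,t^{\ell/2}\,N^{j-\ell}.
\]

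With this sumset bound, I would finish by applying Cauchy--Schwarz to the $r$-fold representation function $r_Y^{(r)}(s):=\#\{(a_1,\dots,a_r)\in Y^r:\sum a_i=s\}$ with $Y:=F_\ell\cap A_j$:
\[
M_Y\ =\ \sum_s r_Y^{(r)}(s)^2\ \geq\ \frac{\bigl(\sum_s r_Y^{(r)}(s)\bigr)^2}{|rY|}\ =\ \frac{|Y|^{2r}}{|rY|}.
\]
Using $|Y|=|P_\ell|\cdot t^{j-\ell}=t^{j-\ell/2}$ yields
\[
M_Y\ \geq\ \frac{t^{2rj-r\ell}}{r^{\ell+1}\,t^{\ell/2}\,N^{j-\ell}}\ =\ r^{-\ell-1}\,t^{(2r-1)\ell/2}\,(t^{2r}/N)^{j-\ell},
\]
which is exactly the claimed inequality.

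The one conceptual step is recognizing that the GAP structure of $P_\ell$ forces $|rP_\ell|$ to be much smaller than the trivial bound (roughly $rN^\ell$), improving the Cauchy--Schwarz lower bound by exactly the factor needed to upgrade the trivial $|Y|^{2r}/(rN^j)$ estimate to the stated form. I expect no largeness assumption on $n_0$ to be required for this lemma, since the bound $|rP_\ell|\leq(rm)^\ell$ is a purely combinatorial upper bound that holds regardless of whether the GAP is proper (no-carries considerations matter for a \emph{lower} bound on $|rP_\ell|$, not an upper one).
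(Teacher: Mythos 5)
Your proof is correct and follows essentially the same route as the paper's: both bound the $r$-fold sumset of $Y=F_\ell\cap A_j$ by $r^{\ell+1}t^{\ell/2}N^{j-\ell}$ using the arithmetic-progression structure of the first $\ell$ "digits" (the paper writes this out via digit expansions, you via the sumset $rP_\ell+\bigl([0,rN^{-\ell})\cap N^{-j}\mathbb{Z}\bigr)$, which is the same decomposition), and then apply Cauchy--Schwarz to the representation function to get $M_Y\geq |Y|^{2r}/|rY|$. Your closing remark is also right: no largeness assumption on $n_0$ is needed here, and the paper's statement indeed imposes none.
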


\begin{proof}
Throughout the proof, the parameters $j,\ell$ will be kept fixed. Let
$$
Y=A_j\cap F_\ell,     \ \ \ |Y|=t^{\ell/2}t^{j-\ell}
$$
and 
$$ 
Z=\{a_1+\dots+a_r:\ a_1,\ldots,a_{r} \in Y\}
$$
We claim that
\begin{equation}\label{size-Z}
|Z|\leq (rt^{1/2})^\ell \, rN^{j-\ell}.
\end{equation}
Indeed, each $y\in Y$ has a unique digit representation
\begin{as*}
y= \sum_{k=1}^{\ell} y^{(k)} N^{-k}+y^{(\ell +1)}N^{-j}
\end{as*}
where $y^{(k)} \in P$ for $k=1,\ldots,\ell$ and $y^{(\ell +1)}\in [N^{j-\ell}]$.
We may assume that $P=\{x,x+d,\dots,x+(t^{1/2}-1)d\}$.
Then each $z\in Z$ can be written (not necessarily uniquely) as
\begin{as*}
z=  \sum_{k=1}^{\ell} z^{(k)} N^{-k} + z^{(\ell +1)}N^{-j}
\end{as*}
where $z^{(\ell +1)}\in \{0,1,\dots, r(N^{j-\ell}-1)\}$ and  
$$z^{(k)} \in P':= \{rx,rx+d,\dots,rx+r(t^{1/2}-1)d\}$$
for $k=1,\ldots,\ell$. Since
$|\{0,1,\dots, r(N^{j-\ell}-1)\}|\leq rN^{j-\ell}$ and $|P'|\leq rt^{1/2}$, (\ref{size-Z}) follows.

We now prove (\ref{e-MF}). For $z\in N^{-j}\ZZ$, let
$$
g(z)=\# \cbr{ (y_1,\dots,y_r) \in Y^r: \textstyle{\sum_{i=1}^{r} y_i = z} }
$$
Then $\|g\|_{\ell^1}=|Y|^r$, $\|g\|^2_{\ell^2}=M_Y$, and 
$g$ is supported on $Z$. By H\"older's inequality, $\| g\|_{\ell^1} \leq \| g \|_{\ell^2} |Z|^{1/2}$, so that
$$
M_Y \geq \frac{\| g \|_{\ell^1}^{2}}{|Z|}
\geq \frac{(t^{\ell/2}t^{j-\ell} )^{2r}}{(rt^{1/2})^\ell \, rN^{j-\ell}} 
$$
as claimed.

\end{proof}

The next lemma is Lemma 9.A.4 of \cite{W}. We will use it in the proof of Lemma \ref{fldmul}.
\begin{lemma}\label{Lemma 9.A.4}
Let $m$ be a measure on the torus $\mathbb{T} = \mathbb{R} / \mathbb{Z}$, and let $\phi$ be a Schwartz function on $\mathbb{R}$. Define a measure $m^{\prime}$ on $\mathbb{R}$ by
\begin{as*}
dm^{\prime}(x) = \phi(x)dm(\cbr{x}),
\end{as*}
where $\cbr{x}$ is the fractional part of $x$. Then for all $\xi \in \mathbb{R}$,
\begin{as*}
\widehat{m^{\prime}}(\xi) = \sum_{k \in \mathbb{Z}} \widehat{m}(k) \widehat{\phi}(\xi-k).
\end{as*}
Moreover, if there are $C > 0$ and $\alpha > 0$ such that
\begin{as*}
|\widehat{m}(k)| \leq C(1+|k|)^{-\alpha} \quad \text{for all } k \in \mathbb{Z},
\end{as*}
then there is a $C^{\prime} > 0$ such that
\begin{as*}
|\widehat{m^{\prime}}(\xi)| \leq C^{\prime}(1+|\xi|)^{-\alpha} \quad \text{for all } \xi \in \mathbb{R}.
\end{as*}
\end{lemma}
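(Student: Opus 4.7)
The plan is to derive the identity for $\widehat{m'}(\xi)$ by lifting $m$ from $\mathbb{T}$ to a periodic distribution on $\mathbb{R}$ and then applying Poisson summation in the ``extra'' variable $n$ that appears when $\mathbb{R}$ is tiled by integer translates of $[0,1)$. The decay bound will then follow directly from the formula, the Schwartz rapid decay of $\widehat{\phi}$, and a routine dyadic split of the summation index.

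First I would compute $\widehat{m'}(\xi)$ by unfolding the base measure. Decompose $\mathbb{R}=\bigsqcup_{n\in\mathbb{Z}}[n,n+1)$ and substitute $x=n+t$ with $t\in[0,1)$ to obtain
\begin{equation*}
\widehat{m'}(\xi)=\int_{\mathbb{R}}e^{-2\pi i\xi x}\phi(x)\,dm(\{x\})=\int_{0}^{1}e^{-2\pi i\xi t}\Bigl(\sum_{n\in\mathbb{Z}}\phi(n+t)e^{-2\pi i\xi n}\Bigr)\,dm(t).
\end{equation*}
The inner sum is $\sum_{n}g(n)$ with $g(x)=\phi(x+t)e^{-2\pi i\xi x}$, a Schwartz function whose Fourier transform at an integer $k$ equals $e^{2\pi i(\xi+k)t}\widehat{\phi}(\xi+k)$. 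Poisson summation converts it into $\sum_{k}e^{2\pi i(\xi+k)t}\widehat{\phi}(\xi+k)$. Substituting back, the $e^{-2\pi i\xi t}$ factor cancels the $e^{2\pi i\xi t}$ from Poisson, leaving $\int_{0}^{1}e^{2\pi ikt}\,dm(t)=\widehat{m}(-k)$ inside each term. Relabeling $k\mapsto-k$ gives $\widehat{m'}(\xi)=\sum_{k\in\mathbb{Z}}\widehat{m}(k)\widehat{\phi}(\xi-k)$. Fubini/absolute convergence is justified by the Schwartz decay of $\phi$ and the finiteness of the total mass of $m$.

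For the second assertion I would feed the hypothesis $|\widehat{m}(k)|\leq C(1+|k|)^{-\alpha}$ into the formula just derived, yielding $|\widehat{m'}(\xi)|\leq C\sum_{k}(1+|k|)^{-\alpha}|\widehat{\phi}(\xi-k)|$, and then bound the sum by splitting at $|k|=|\xi|/2$. When $|k|\leq|\xi|/2$, the argument $|\xi-k|\geq|\xi|/2$ is large, so the Schwartz decay $|\widehat{\phi}(\eta)|\leq C_{N}(1+|\eta|)^{-N}$ with $N$ chosen much larger than $\alpha+1$ makes this piece $O((1+|\xi|)^{-N+1})$, which is dominated by $(1+|\xi|)^{-\alpha}$. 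When $|k|>|\xi|/2$, the factor $(1+|k|)^{-\alpha}$ is already at most $C(1+|\xi|)^{-\alpha}$, and the remaining sum $\sum_{k}|\widehat{\phi}(\xi-k)|$ is bounded uniformly in $\xi$ since $\widehat{\phi}$ is Schwartz (it is essentially a Riemann sum for $\|\widehat{\phi}\|_{L^{1}}$).

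The only point that takes any thought is justifying the interchange of integration over $t$ with Poisson summation and verifying that the Schwartz tails of $\phi$ really do make every step absolutely convergent; the arithmetic identity itself and the final dyadic bound are essentially automatic. I anticipate no serious obstacle beyond writing these bookkeeping steps cleanly.
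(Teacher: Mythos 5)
Your proof is correct. The paper itself gives no proof of this lemma --- it simply cites it as Lemma 9.A.4 of Wolff's lecture notes --- and your argument (unfolding $\mathbb{R}$ into integer translates of $[0,1)$ and applying Poisson summation to the Schwartz function $g(x)=\phi(x+t)e^{-2\pi i\xi x}$, then deducing the decay by splitting the sum at $|k|=|\xi|/2$) is exactly the standard periodization-plus-Poisson proof of that cited result, with the convergence issues correctly accounted for by the Schwartz decay of $\phi$ and the finiteness of $m$.
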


\begin{lemma}\label{fldmul}
Let $\ell,r \in \mathbb{N}$ with $r > \frac{1}{\alpha}$. Then 
\begin{equation}\label{e-up}
\norm{\widehat{f_\ell d\mu }}^{2r}_{L^{2r}(\mathbb{R})} 
\geq C_{2r} \frac{N^\ell r^{-\ell-1} }{t^{\ell (2r+1)/2}},
\end{equation}
where $$C_{2r} = \int_{\infty}^{-\infty} \rbr{\frac{\sin(\pi x)}{\pi x}}^{2r} dx \in (0,\infty).$$
\end{lemma}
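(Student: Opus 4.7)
The plan is to bound $\|\widehat{f_\ell d\mu_j}\|_{L^{2r}(\RR)}^{2r}$ for each $j\geq\ell$ in terms of the additive energy count $M_{A_j\cap F_\ell}$, and then transfer to $\mu$ using the Fourier estimates of Section~\ref{construction of measure}. The density \eqref{densities} gives
\[
\widehat{f_\ell d\mu_j}(\xi) = t^{-j}\,S_{A_j\cap F_\ell}(\xi)\cdot\frac{1-e^{-2\pi i\xi/N^j}}{2\pi i\xi/N^j},
\]
so after the substitution $\xi=N^j u$,
\[
\|\widehat{f_\ell d\mu_j}\|_{L^{2r}}^{2r}
= N^j\,t^{-2rj}\int_{\RR}|S_{A_j\cap F_\ell}(N^j u)|^{2r}\,\sinc^{2r}(\pi u)\,du.
\]

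The key observation is that $\widehat{\sinc^{2r}(\pi\cdot)}$ equals the $2r$-fold convolution $\one_{[-1/2,1/2]}^{*2r}$, hence it is \emph{nonnegative everywhere}, supported in $[-r,r]$, and equal to $C_{2r}=\int\sinc^{2r}(\pi u)\,du$ at the origin. Expanding $|S_{A_j\cap F_\ell}(N^j u)|^{2r}$ as a sum of exponentials $e^{-2\pi iMu}$ over $2r$-tuples $(a_1,\ldots,a_{2r})\in(A_j\cap F_\ell)^{2r}$ with $M=N^j(a_1+\cdots+a_r-a_{r+1}-\cdots-a_{2r})\in\ZZ$, and integrating term-by-term, each contribution is $\widehat{\sinc^{2r}(\pi\cdot)}(M)\geq 0$. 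Keeping only the diagonal tuples (those with $M=0$, counted by $M_{A_j\cap F_\ell}$) gives
\[
\|\widehat{f_\ell d\mu_j}\|_{L^{2r}}^{2r}\;\geq\;C_{2r}\,N^j\,t^{-2rj}\,M_{A_j\cap F_\ell},
\]
and substituting Lemma~\ref{tuples} the factors of $(t^{2r}/N)^{j-\ell}$ cancel against $N^j t^{-2rj}$ to yield the $j$-independent lower bound $C_{2r}\,r^{-\ell-1}\,N^\ell\,t^{-(2r+1)\ell/2}$.

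It remains to replace $\mu_j$ by $\mu$ on the left. Since $\mu$ has no atoms (Lemma~\ref{Lemma 6.1}) and $\partial F_\ell$ is finite, weak convergence $\mu_j\to\mu$ gives $\widehat{f_\ell d\mu_j}(\xi)\to\widehat{f_\ell d\mu}(\xi)$ pointwise. To upgrade this to $L^{2r}$ convergence I would write the difference as the telescoping tail $\sum_{i\geq j}\bigl(\widehat{f_\ell d\mu_{i+1}}-\widehat{f_\ell d\mu_i}\bigr)$ and bound it using Lemma~\ref{Lemma 6.4} termwise and Lemma~\ref{Lemma 6.5} on the sum. This produces a $j$-uniform dominating function of size $C(n_0,\beta)|\xi|^{-\beta/2}$ for $|\xi|\geq 1$ (and bounded for $|\xi|\leq 1$), for any $\beta<\alpha$. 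Choosing $\beta<\alpha$ with $r\beta>1$---possible precisely because $r>1/\alpha$---puts this dominating function in $L^{2r}(\RR)$, and dominated convergence gives $\|\widehat{f_\ell d\mu_j}-\widehat{f_\ell d\mu}\|_{L^{2r}}\to 0$. Passing to the limit in the bound above then yields \eqref{e-up}.

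The main obstacle is the final transfer step: controlling $L^{2r}$ convergence forces exactly the hypothesis $r>1/\alpha$, which coincides with the dimension threshold for the $r$-fold self-convolution of a dimension-$\alpha$ measure to lie in $L^2$ (via Plancherel $\int|\widehat{\nu}|^{2r}d\xi=\|\nu^{*r}\|_{L^2}^2$). Once this integrability is in hand, all other steps are additive-combinatorial bookkeeping supplied by Lemma~\ref{tuples}, together with the positivity of $\widehat{\sinc^{2r}}$ that lets the diagonal contribution stand alone as a lower bound.
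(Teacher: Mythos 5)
Your proposal is correct and follows essentially the same route as the paper: the same explicit formula for $\widehat{f_\ell d\mu_j}$, the same rescaling and expansion of the $2r$-th power, the positivity of $\widehat{\sinc^{2r}}=\ast_{i=1}^{2r}\one_{[-1/2,1/2]}$ to retain only the diagonal tuples counted by $M_{A_j\cap F_\ell}$, Lemma~\ref{tuples}, and then dominated convergence using a $j$-uniform majorant $\min(1,C|\xi|^{-\beta/2})\in L^{2r}$ with $r\beta>1$. The only cosmetic difference is that you re-derive the uniform Fourier decay by telescoping, whereas the paper cites \eqref{flmuj-decay} together with Lemma~\ref{Lemma 9.A.4} to pass from integer frequencies to all real $\xi$ (a step worth making explicit in your version too).
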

\begin{proof}
By Proposition \ref{prop-main}, for every $0 < \beta < \alpha$ we have
$$
|\widehat{f_\ell d\mu_j}(k)| \leq C|k|^{-\beta/2}
$$ 
for $k \in \mathbb{Z}\setminus \cbr{0}$ and $j \geq \ell$.
By Lemma \ref{Lemma 9.A.4}, this inequality extends to 
$$
|\widehat{f_\ell d\mu_j}(\xi)| \leq C|\xi|^{-\beta/2}
$$ 
for $|\xi| \geq 1$ and $j \geq \ell$. Fix $\beta \in (0,\alpha)$ such that $r > 1/\beta > 1/\alpha$, and let $g(\xi) := \min\rbr{1,C|\xi|^{-\beta/2}}$. Assume $C > 1$ without loss of generality. We have $\abs{\widehat{f_\ell d\mu_j}} \leq g$ and $g \in L^{2r}(\mathbb{R})$. By a straightforward application of the portmanteau theorem on the weak convergence of measures (cf. \cite{B}), the fact that $\mu_j \rightarrow \mu$ weakly implies we have $\widehat{f_\ell d\mu_j} \rightarrow \widehat{f_\ell d\mu}$ pointwise. So, by the dominated convergence theorem, 
$\norm{\widehat{f_\ell d\mu_j }}_{2r} \rightarrow \norm{\widehat{f_\ell d\mu }}_{2r}$.
Therefore, it will suffice to prove that
\begin{as*}
\norm{\widehat{f_\ell d\mu_j }}^{2r}_{2r} 
\geq C_{2r} \frac{N^\ell r^{-\ell} }{t^{\ell (2r+1)/2}}
\end{as*}
for $j \geq \ell$. 

By \eq{densities} we have
$$
f_{\ell} d \mu_{j}
=  t^{-j } N^j \sum_{b \in P_\ell} \sum_{a \in A_j \cap [b,b+N^{-\ell}]} \mathbf{1}_{[a,a+N^{-j }]} dx
$$
so that
\begin{as*}
\widehat{f_\ell d\mu_j}(\xi)
&= \frac{1-e^{-2\pi i \xi /N^j }}{2\pi i \xi /N^j } t^{-j} \sum_{b \in P_\ell} \sum_{a \in A_j \cap [b,b+N^{-\ell}]} e^{-2\pi i a \xi} \\
&= e^{-\pi i \xi / N^j }\sinc(\xi / N^j )\  t^{-j } \sum_{a \in F_\ell \cap A_j } e^{-2\pi i a \xi},
\end{as*}
where $\sinc(x)=\sin(\pi x)/(\pi x)$. 
Therefore 
\begin{as*}
\norm{\widehat{f_\ell d\mu_j }}^{2r}_{2r} 
&= t^{-2rj } \int_{-\infty}^{\infty} \sinc^{2r}(\xi / N^j ) \abs{\sum_{a \in F_\ell \cap A_j } e^{-2\pi i a \xi}}^{2r} d\xi \\
&= \frac{N^j }{t^{2rj }} \int_{-\infty}^{\infty} \sinc^{2r}(\eta) \abs{\sum_{a \in N^j(F_\ell \cap A_j) } e^{-2\pi i a \eta}}^{2r} d\eta \\
&= \frac{N^j }{t^{2rj }} \int_{-\infty}^{\infty} \sinc^{2r}(\eta) \sum_{a_1,\ldots,a_2r \in N^j(F_\ell \cap A_j) } e^{-2 \pi i \eta \sum_{n=1}^{r} (a_{n} - a_{n+r})} \\
&= \frac{N^j }{t^{2rj}} \sum_{a_1,\ldots,a_2r \in N^j(F_\ell \cap A_j) } \widehat{\sinc^{2r}}\rbr{\sum_{n=1}^{r} (a_{n} - a_{n+r})}.
\end{as*}
But 
\begin{as*}
\widehat{\sinc^{2r}} = \ast_{i=1}^{2r} \widehat{\sinc} = \ast_{i=1}^{2r} \mathbf{1}_{[-1/2,1/2]}\geq 0.
\end{as*}
So
\begin{as*}
\norm{\widehat{f_\ell  d\mu_j }}^{2r}_{2r}
\geq \frac{N^j }{t^{2rj }}\ \  \widehat{\sinc^{2r}}(0) 
M_{N^j(F_\ell \cap A_j)}.
\end{as*}
Appealing to Lemma \ref{tuples} completes the proof.
\end{proof}

We can now prove Proposition \ref{prop-p}.

\begin{proof}[Proof of Proposition \ref{prop-p}]
Fix $r\in\NN$ so that $r> 1/\alpha$. By Lemma \ref{fldmul}, (\ref{est-p}) holds with $p=2r$, provided that $n_0$ is large enough. It suffices to prove that it also holds for all $p$ such that $1\leq p < 2r$.

Let $\phi$ be a function in $L^\infty(\RR)$, then for $1\leq p<2r$ we have
$$
\|\phi\|_{2r}^{2r}=\int |\phi|^{2r}=\int |\phi|^p\ |\phi|^{2r-p}\leq \|\phi\|_p^p \, \|\phi\|_\infty^{2r-p}.
$$
We apply this with $\phi=\widehat{f_\ell d\mu}$. We have $\|\widehat{f_\ell d\mu}\|_\infty
\leq \mu(F_\ell)=t^{-l/2}$, so that
$$
\|\widehat{f_\ell d\mu}\|_p^p
\geq C \frac{N^\ell r^{-\ell-1} }{t^{\ell (2r+1)/2}}\cdot(t^{\ell/2})^{2r-p}
=C \frac{N^\ell r^{-\ell-1} }{t^{\ell (p+1)/2}}
$$
as claimed.

\end{proof}

{\bf Acknowledgement.} The authors were supported in part by an NSERC Discovery Grant. We thank Andreas Seeger for pointing us to reference \cite{chen}.


\noindent{\sc Department of Mathematics, University of British Columbia, Vancouver,
B.C. V6T 1Z2, Canada}
                                                                                     
\noindent{\it hambrook@math.ubc.ca, ilaba@math.ubc.ca}

\end{document}